\newcommand{\N}{\ensuremath{\mathbb{N}}}
\newcommand{\eps}{\ensuremath{\varepsilon}} 
\newcommand{\Op}{\ensuremath{\mathcal{L}}}
\newcommand{\Hyp}{\ensuremath{\mathbb{H}}}
\newcommand{\R}{\ensuremath{\mathbb{R}}}
\newcommand{\Prob}{\ensuremath{\mathbb{P}}}
\newcommand{\ind}{\ensuremath{\mathbf{1}}}
\newcommand{\E}{\ensuremath{\mathbb{E}}}
\newtheorem{lemma}{Lemma}
\newtheorem{theo}{Theorem}
\newtheorem*{theoA}{Theorem A}
\newtheorem*{theoB}{Theorem B}
\newtheorem{prop}{Proposition}
\newtheorem{hypo}{Hypothesis}
\newtheorem{remark}{Remark}
\newtheorem{defi}{Definition}
\author{J\"urgen Angst and Camille Tardif}
\begin{document}
\title{D\'evissage of a Poisson boundary under \\ equivariance and regularity conditions}

\maketitle
\abstract{We present a method that allows, under suitable equivariance and regularity conditions, to determine the Poisson boundary of a diffusion starting from the Poisson boundary of a sub-diffusion of the original one. We then give two examples of application of this d\'evissage method. Namely, we first recover the classical result that the Poisson boundary of Brownian motion on a rotationally symmetric manifolds is generated by its escape angle, and we then give an ``elementary'' probabilistic proof of the delicate result of \cite{ismael}, i.e. the determination of the Poisson boundary of the relativistic Brownian motion in Minkowski space-time.}

\section{Introduction} \label{sec.intro}
The Poisson boundary of a Markov process is a measure space which reflects precisely its long-time asymptotic behavior. In the same time, it can be seen as a random compactification of the state space since it gives some salient information on its geometry at infinity. Finally, it provides a nice representation of bounded harmonic functions associated to the generator of the process, see for example \cite{harry,babillot} for nice introductions to the topic.\par 
\medskip 
Focusing on the case of Brownian motion on Lie groups or Riemannian manifolds, the Poisson boundary can be computed explicitely in a number of examples: semi-simple groups \cite{raugi}, constant curvature Riemannian manifolds and pinched Cartan-Hadamard manifolds \cite{sullivan} etc. Nevertheless, the explicit determination of the Poisson boundary of the Brownian on a general Riemannian manifold is largely out of reach. Indeed, even in the case of Cartan-Hadamard manifolds, the question of its triviality (which is a priori a much simpler problem than its explicit determination) is equivalent to the Green-Wu conjecture on the existence of bounded harmonic functions, see \cite{marcanton} and the references therein.
\par 
\medskip 
In this paper, we present a so-called d\'evissage method that allows, under equivariance and regularity conditions, to determine the Poisson boundary of a diffusion starting from the Poisson boundary of a sub-diffusion of the original one. Namely, if the state space $E$ can be written as $E=X \times G$ in an appropriate coordinate system, where $X$ is a differentiable manifold and $G$ is a finite dimensional connected Lie group, and with 
standard notations recalled in Sect. \ref{sec.backgrounds} below, we prove the following result:

\begin{theoA}[Theorems \ref{theo.trivial}-\ref{theo.gene} below]\label{theo.A}
Let $(x_t,g_t)_{t \geq 0}$ be a diffusion process with values in $X \times G$, starting from $(x,g) \in X \times G$ and satisfying Hypotheses 1 to 4 of Sect. \ref{sec.hypo} below. In particular the first projection $(x_t)_{t \geq 0}$ is itself a diffusion process with values in $X$ and when $t$ goes to infinity, the second projection $(g_t)_{t \geq 0}$ converges $\mathbb P_{(x,g)}-$almost surely to a random element $g_{\infty}$ of $G$. Then, the invariant sigma fields $\textrm{Inv}((x_t, g_t)_{t \geq 0})$ of the full diffusion coincides up to $\mathbb P_{(x,g)}-$negligeable sets with the sum $\textrm{Inv}((x_t)_{t \geq 0}) \vee \sigma(g_{\infty})$. 
\end{theoA}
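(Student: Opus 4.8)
The plan is to prove the two inclusions separately, the inclusion $\textrm{Inv}((x_t)_{t\ge 0})\vee\sigma(g_\infty)\subseteq\textrm{Inv}((x_t,g_t)_{t\ge 0})$ being the soft one. Indeed, by construction $g_\infty=\lim_{t\to\infty}g_t$ satisfies $g_\infty\circ\theta_s=g_\infty$ for every time shift $\theta_s$, so $\sigma(g_\infty)\subseteq\textrm{Inv}((x_t,g_t)_{t\ge 0})$; and since $(x_t)_{t\ge 0}$ is a measurable factor of $(x_t,g_t)_{t\ge 0}$, any shift-invariant event of the first coordinate pulls back to a shift-invariant event of the pair, whence $\textrm{Inv}((x_t)_{t\ge 0})\subseteq\textrm{Inv}((x_t,g_t)_{t\ge 0})$ as well.

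For the reverse inclusion I would invoke the classical dictionary between the invariant $\sigma$-field and bounded $\Op$-harmonic functions: any bounded $\textrm{Inv}((x_t,g_t)_{t\ge 0})$-measurable random variable $Z$ can, by the Markov property followed by the martingale convergence theorem, be written as $Z=\lim_{t\to\infty}H(x_t,g_t)$, $\Prob_{(x,g)}$-almost surely and in $L^1$, where $H(y,h):=\E_{(y,h)}[Z]$ is bounded, $\Op$-harmonic and, by the regularity (hypoellipticity-type) part of the hypotheses, continuous on $X\times G$. It thus suffices to show that for every bounded $\Op$-harmonic $H$ the limit $\lim_{t\to\infty}H(x_t,g_t)$ is $\textrm{Inv}((x_t)_{t\ge 0})\vee\sigma(g_\infty)$-measurable. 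I would moreover reduce the general statement (Theorem \ref{theo.gene}) to the case where $\textrm{Inv}((x_t)_{t\ge 0})$ is $\Prob_{(x,g)}$-trivial (Theorem \ref{theo.trivial}) by disintegrating the sub-diffusion $(x_t)_{t\ge 0}$ over its own invariant $\sigma$-field: this disintegration corresponds to a Doob $h$-transform of the generator of $(x_t)_{t\ge 0}$ which leaves the $G$-component and its equivariance untouched, so that Hypotheses 1--4 persist on $\Prob_{(x,g)}$-almost every fiber while the invariant $\sigma$-field of the conditioned $x$-process becomes trivial; applying the trivial-boundary statement fiberwise and reassembling then gives $\textrm{Inv}((x_t,g_t)_{t\ge 0})\subseteq\textrm{Inv}((x_t)_{t\ge 0})\vee\sigma(g_\infty)$.

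It remains to run the argument when $\textrm{Inv}((x_t)_{t\ge 0})$ is trivial, the target being $Z=\lim_tH(x_t,g_t)\in\sigma(g_\infty)$. By equivariance, the conditional law of $g_\infty$ given $\mathcal F_t$ is $g_t\cdot\mu_{x_t}$, where $\mu_y$ denotes the law of $g_\infty$ under $\Prob_{(y,e)}$; testing against the particular bounded harmonic functions $(y,h)\mapsto\E_{(y,h)}[\varphi(g_\infty)]$ for $\varphi\in C_b(G)$, the martingale convergence theorem yields the weak convergence of harmonic measures $g_t\cdot\mu_{x_t}\Rightarrow\delta_{g_\infty}$, $\Prob_{(x,g)}$-almost surely. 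One then compares, for a general bounded harmonic $H$, the martingale $H(x_t,g_t)$ with the quantity obtained from $H$ by ``freezing'' the $G$-variable at $g_\infty$, the discrepancy being controlled by the continuity of $H$ together with the above convergence, and concludes that the limit $Z$ depends on the trajectory only through $g_\infty$ --- the triviality of $\textrm{Inv}((x_t)_{t\ge 0})$ being exactly what rules out a genuine dependence on the escaping $x$-component.

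The main obstacle, I expect, is precisely this last comparison: transferring the weak convergence $g_t\cdot\mu_{x_t}\Rightarrow\delta_{g_\infty}$ through an arbitrary bounded $\Op$-harmonic function while the base point $x_t$ runs off to infinity in $X$. This is where the quantitative ``regularity conditions'' of the hypotheses should enter decisively --- typically as an equicontinuity, uniform in the base point, of the family $\{H(y,\cdot):y\in X\}$ on $G$, or equivalently a base-point-uniform modulus of continuity for the harmonic measures $\mu_y$ --- while a secondary, more bookkeeping-type point is to check that Hypotheses 1--4 are genuinely stable under the conditioning used in the passage from Theorem \ref{theo.trivial} to Theorem \ref{theo.gene}.
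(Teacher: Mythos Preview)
Your easy inclusion and the dictionary between invariant variables and bounded harmonic functions are fine. The Doob $h$-transform reduction from the general case to the trivial-boundary case is a plausible route --- the authors in fact allude to it in a suppressed remark --- but the paper instead gives a direct argument adapting the trivial-case proof, handling the joint measurability in $(\ell,h)$ via a result of Dellacherie--Meyer on measurable families of densities.

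The genuine gap is in your treatment of the trivial case, and you have correctly located it yourself. Passing the weak convergence $g_t\cdot\mu_{x_t}\Rightarrow\delta_{g_\infty}$ through an \emph{arbitrary} bounded harmonic $H$ while $x_t$ escapes in $X$ would require equicontinuity of $\{H(y,\cdot):y\in X\}$ uniformly in $y$, or an equivalent base-point-uniform modulus for the harmonic measures. But Hypothesis~\ref{hypo.reg} asserts only that $\Op$-harmonic functions are continuous on $X\times G$; there is no quantitative or uniform content whatsoever. The ``equicontinuity, uniform in the base point'' you hope for is simply not available from the stated hypotheses, so the comparison $|H(x_t,g_t)-H(x_t,g_\infty)|\to 0$ (or any variant) cannot be justified. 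Note also that the weak convergence you establish only helps \emph{a priori} for harmonic functions already of the form $(y,h)\mapsto\E_{(y,h)}[\varphi(g_\infty)]$, which is precisely the conclusion to be proved.

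The paper's key idea, which you are missing, is a \emph{random renormalization} that exploits equivariance more fully than you do. Given the invariant variable $Z$, one sets $Z^h(\omega):=Z(h\,g_\infty(\omega)^{-1}\cdot\omega)$. This is again shift-invariant, and by the equivariance lemma its expectation $\E_{(x,g)}[Z^h]$ does not depend on $g$; triviality of $\textrm{Inv}((x_t)_{t\ge 0})$ then forces it to be a constant $\psi(h)$. One regularizes in $h$ by convolving with an approximate identity $\rho_n$ on $G$, uses a countable dense set in $G$ to substitute $h=g_\infty(\omega)$, manipulates via right-invariance of the Haar measure, and only at the very last step --- after all delicate dependence has been absorbed into $\psi$ --- invokes plain continuity of harmonic functions to let $n\to\infty$. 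Thus mere continuity suffices, but only because the renormalization by $g_\infty^{-1}$ combined with equivariance has already eliminated the need for any uniformity in $x$.
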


Under some natural extra hypothesis, the above theorem can be extended to the case where the group $G$ is replaced by a finite dimensional co-compact homogeneous space $Y:=G/K$. 

\begin{theoB}[Theorem \ref{theo.homo} below]\label{theo.B}
Let $(x_t,y_t)_{t \geq 0}$ be a diffusion process with values in $X \times Y$, starting from $(x,y) \in X \times Y$ and satisfying Hypothesis 5 of Sect. \ref{sec.hypo} below. In particular the first projection $(x_t)_{t \geq 0}$ is itself a diffusion process with values in $X$ and when $t$ goes to infinity, the second projection $(y_t)_{t \geq 0}$ converges $\mathbb P_{(x,y)}-$almost surely to a random element $y_{\infty}$ of $Y$. Then, the two sigma fields 
$\textrm{Inv}((x_t, y_t)_{t \geq 0})$ and $\textrm{Inv}((x_t)_{t \geq 0}) \vee \sigma(y_{\infty})$ 
coincide up to $\mathbb P_{(x,y)}-$negligeable sets. 
\end{theoB}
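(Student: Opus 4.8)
The plan is to deduce Theorem B from Theorem A by lifting the diffusion to $X\times G$. Write $\pi\colon G\to Y=G/K$ for the canonical projection and recall that $K$ is compact. Hypothesis 5 is tailored precisely so that $(x_t,y_t)_{t\geq0}$ admits a lift: a diffusion $(x_t,g_t)_{t\geq0}$ on $X\times G$ whose image under $\mathrm{id}\times\pi$ has, starting from any $(x,g)$ with $\pi(g)=y$, the same law as $(x_t,y_t)_{t\geq0}$ started from $(x,y)$; which satisfies Hypotheses 1 to 4; and which is equivariant under the right $K$-action on the $G$-factor, meaning that, setting $T_k(x_\bullet,g_\bullet):=(x_\bullet,g_\bullet k)$, one has $(T_k)_\ast\Prob_{(x,g)}=\Prob_{(x,gk)}$ for every $k\in K$ and every starting point. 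Concretely, such a lift is produced by horizontally lifting the $Y$-component along a right-$K$-invariant principal connection on the bundle $K\hookrightarrow G\to Y$, which exists by compactness of $K$; the $\Prob_{(x,g)}$-a.s. convergence $y_t\to y_\infty$ then propagates to a $\Prob_{(x,g)}$-a.s. convergence $g_t\to g_\infty\in G$ with $\pi(g_\infty)=y_\infty$. One must of course verify that the lift so constructed really does fulfil Hypotheses 1 to 4 and the claimed equivariance.

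Granting this, Theorem A applied to $(x_t,g_t)_{t\geq0}$ gives $\textrm{Inv}((x_t,g_t)_{t\geq0})=\textrm{Inv}((x_t)_{t\geq0})\vee\sigma(g_\infty)$ up to $\Prob_{(x,g)}$-negligible sets. Two inclusions are then immediate. First, $(x_t)_{t\geq0}$ is a factor of $(x_t,y_t)_{t\geq0}$ and $y_\infty=\lim_t y_t$ is shift-invariant, so $\textrm{Inv}((x_t)_{t\geq0})\vee\sigma(y_\infty)\subseteq\textrm{Inv}((x_t,y_t)_{t\geq0})$. Second, $(x_t,y_t)_{t\geq0}$ is a factor of $(x_t,g_t)_{t\geq0}$ via $\mathrm{id}\times\pi$, hence $\textrm{Inv}((x_t,y_t)_{t\geq0})\subseteq\textrm{Inv}((x_t,g_t)_{t\geq0})=\textrm{Inv}((x_t)_{t\geq0})\vee\sigma(g_\infty)$. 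It remains to sharpen this last inclusion by showing that an invariant event of $(x_t,y_t)_{t\geq0}$ cannot detect the extra $K$-fibre information carried by $g_\infty$ beyond $y_\infty=\pi(g_\infty)$.

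This sharpening, where the equivariance is put to work, is the main obstacle. Let $\Phi$ be bounded and $\textrm{Inv}((x_t,y_t)_{t\geq0})$-measurable; picking an $\textrm{Inv}((x_t)_{t\geq0})$-measurable random variable $\xi$ that generates this $\sigma$-field up to null sets, the inclusions above give $\Phi=F(\xi,g_\infty)$, $\Prob_{(x,g)}$-a.s., for some Borel $F$. Now $\Phi$ factors through the paths of $(x_t,y_t)_{t\geq0}=(x_t,\pi(g_t))_{t\geq0}$, while $T_k$ preserves this projection, fixes the $X$-component (so $\xi\circ T_k=\xi$), and acts on the limit by $g_\infty\circ T_k=g_\infty k$; consequently $F(\xi,g_\infty)=F(\xi,g_\infty k)$ for every $k\in K$, almost surely. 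Averaging over $k$ against the normalized Haar measure of $K$, and using Fubini together with the continuity of the $K$-action, $\Phi$ equals a.s. $\int_K F(\xi,g_\infty k)\,dk$, which is a right-$K$-invariant function of $g_\infty$, hence of the form $\widehat F(\xi,\pi(g_\infty))=\widehat F(\xi,y_\infty)$. Thus $\Phi$ is $\textrm{Inv}((x_t)_{t\geq0})\vee\sigma(y_\infty)$-measurable and the proof is complete. The delicate points are: ensuring that the representation $\Phi=F(\xi,g_\infty)$ coming from Theorem A is valid simultaneously at all starting points of the fibre $\{x\}\times gK$, so that the relation $(T_k)_\ast\Prob_{(x,g)}=\Prob_{(x,gk)}$ may legitimately transport the almost-sure equality across $k$; and aggregating the resulting $k$-indexed family of null sets, which is precisely what the Haar averaging achieves.
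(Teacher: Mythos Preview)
Your argument is essentially correct and follows the same route as the paper: lift to $X\times G$ via Hypothesis~5, apply Theorem~A to get a representation in terms of $(\ell_\infty,g_\infty)$, and then use right-$K$-equivariance together with Haar averaging over $K$ to show that only $y_\infty=\pi(g_\infty)$ survives. The paper works at the level of expectations (showing $\E_{(x,y)}[Z]=\E_{(x,y)}[H(\ell_\infty,y_\infty)]$ for a function $H$ built by averaging $\widetilde H(\ell,\mathcal S(y)k)$ over $K$ using a measurable section $\mathcal S:Y\to G$), whereas you work directly with the random variable $\Phi=F(\xi,g_\infty)$ and average pathwise; these are equivalent formulations, and your Fubini argument for aggregating the $k$-indexed null sets is exactly what is needed.

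One remark: your paragraph about \emph{constructing} the lift via a right-$K$-invariant connection, and the claim that convergence of $y_t$ ``propagates'' to convergence of $g_t$, is superfluous and potentially misleading. Hypothesis~5 does not ask you to build anything: it \emph{postulates} the existence of a diffusion $(x_t,g_t)$ on $X\times G$ already satisfying Hypotheses~1--4 (in particular Hypothesis~2 gives $g_t\to g_\infty$ directly) and right-$K$-equivariant, with $(x_t,\pi(g_t))$ having the law of $(x_t,y_t)$. So there is nothing to verify on that front. Horizontal lifts of convergent paths need not converge in general, so that heuristic should simply be dropped; the proof stands without it. The ``delicate point'' you flag---that the function $F$ coming from Theorem~A is the same for every starting point in the fibre---is real and is handled in the paper's proof of Theorem~A: the function $\widetilde\psi$ (your $F$) is built from $Z$ alone, independently of $(x,g)$.
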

\par 
\medskip 
The plan of the paper is the following: in the next Sect. \ref{sec.devissage}, we specify the geometric and probabilistic backgrounds and then the equivariance and regularity conditions under which the d\'evissage method can be applied. Sect. \ref{sec.proof} is devoted to the proofs of the results stated above: we first consider the case where $\textrm{Inv}((x_t)_{t \geq 0})$ is trivial and $G$ is a finite dimensional Lie group (Theorem \ref{theo.trivial} of Sect. \ref{sec.trivial}), then the case where $\textrm{Inv}((x_t)_{t \geq 0})$ is non-trivial but $G$ is still a finite dimensional Lie group (Theorem \ref{theo.gene} of Sect. \ref{sec.gene}). Finally, we extend this result to case where $Y=G/K$ is a finite dimensional co-compact homogeneous space (Theorem \ref{theo.homo} of Sect. \ref{sec.homo}). 
In Sect. \ref{sec.rot}, we first apply the d\'evissage method to recover the classical result that the Poisson boundary of the standard Brownian motion on a rotationally symmetric manifold is generated by its limit escape angle, see e.g. \cite{march,marcanton2}. To conclude, in Sect. \ref{sec.minkowski}, we give an ``elementary'' probabilistic proof of the main result of \cite{ismael} i.e. the determination of the Poisson boundary of the relativistic Brownian motion in Minkowski space-time.

\section{The d\'evissage method framework} \label{sec.devissage}
\subsection{Geometric and probabilistic background}\label{sec.backgrounds}

Let $X$ be a differentiable manifold and $G$ a finite dimensional connected Lie group, in particular $G$ carries a right invariant Haar measure $\mu$. As usual, let us denote by $C^{\infty}(X \times G, \mathbb R)$ the set of smooth functions from $X \times G$ to the real line $\mathbb R$. From the natural left action of $G$ on itself
\[
\begin{array}{cll}  G \times G & \to & G \\   (g, h) & \mapsto & g.h := gh \end{array},
\]
we deduce a left action of $G$ on $C^{\infty}(X \times G, \mathbb R)$, namely:
\[ 
\begin{array}{cll}  G \times C^{\infty}(X \times G, \mathbb R) & \to & C^{\infty}(X \times G, \mathbb R) \\   (g, f) & \mapsto & g \cdot f := \left( (x,h) \mapsto f(x,g.h) \right)\end{array}.
\]
In this context, let $(x_t, g_t)_{t \geq 0}$ be a diffusion process with values in $X \times G$ and with infinite lifetime. We denote by $\mathcal L$ its infinitesimal generator acting on $C^{\infty}(X \times G, \mathbb R)$. Without loss of generality, we can suppose that the process $(x_t, g_t)_{t \geq 0}$ is defined on the canonical space $(\Omega, \mathcal F)$ where $\Omega:= C(\mathbb R^+,X \times G)$ is the paths space and $\mathcal F$ is its standard Borel sigma field. A generic element $\omega = (\omega_t)_{t \geq 0} \in \Omega$ can then be written $\omega = (\omega^X, \omega^G)$ where $\omega^X =(\omega_t^X)_{t \geq 0} \in C(\mathbb R^+, X)$ and $\omega^G = (\omega_t^G)_{t \geq 0} \in C(\mathbb R^+,G)$. The law of a sample path $(x_t, g_t)_{t \geq 0}$ starting from $(x,g)$ will be denoted by $\mathbb P_{(x,g)}$ and $\mathbb E_{(x,g)}$ will denote the associated expectation. Note that we have again a natural left action of $G$ on $\Omega$:
\[ 
\begin{array}{cll}  G \times \Omega & \to & \Omega\\  (g,\omega) & \mapsto & g.\omega:=(\omega^X, g.\omega^G) \end{array},
\]
where $g.\omega^G := (g.\omega_t^G)_{t \geq 0} \in C(\mathbb R^+, G)$. Without loss of generality, we can also suppose that $(x_t, g_t)_{t \geq 0}$ is the coordinate process, namely: $ x_t(\omega) = \omega_t^X$, $g_t(\omega) = \omega^G_t$, $\forall t \geq 0$. With standard notations, we introduce tail sigma field associated to the diffusion:
$ \mathcal F^{\infty} := \bigcap_{t \geq 0} \sigma( (x_s,g_s), \, s \geq t),$
and we consider the classical shift operators $(\theta_s)_{s \geq 0}$ on $\Omega$:
\[ 
\begin{array}{lcll} \theta_s : &  \Omega & \to & \Omega \\ & \omega=(\omega_t)_{t \geq 0} & \mapsto & \theta_s \omega := (\omega_{t+s})_{t \geq 0} \end{array}.
\]
Recall that, by definition, the invariant sigma field $\textrm{Inv}((x_t,g_t)_{t \geq 0})$ associated to the diffusion process $(x_t, g_t)_{t \geq 0}$ is the sub-sigma field of $\mathcal F^{\infty}$ composed of invariant events, that is events $A \in \mathcal F^{\infty}$ such that $\theta_s^{-1} A=A$ for all $s >0$.
\par
\medskip
If $K$ is a compact subgroup of $G$, we will denote by $Y$ the associated homogeneous space i.e. $Y:= G/K$ and by $\pi$ the canonical projection $\pi : G \to G/K$. As above, given a  diffusion process $(x_t, y_t)_{t \geq 0}$ on $X \times Y$, we will denote by $\mathbb P_{(x,y)}$ the law of the path starting from $(x,y) \in X \times Y$, which we realize as probability measure on the canonical space $\pi(\Omega)= C(\mathbb R^+, X \times Y)$.

\subsection{D\'evissage, convergence, equivariance and regularity conditions} \label{sec.hypo}

In the case of a group i.e. given a diffusion process  $(x_t, g_t)_{t \geq 0}$ with values in $X \times G$, the d\'evissage method can be applied under the following set of hypotheses:

\begin{hypo}[D\'evissage condition]\label{hypo.devissage}The first projection $(x_t)_{t \geq 0}$ is a sub-diffusion of the full process $(x_t,g_t)_{t \geq 0}$. Its own invariant sigma field $\textrm{Inv}((x_t)_{t \geq 0})$ is either trivial or generated by a random variable $\ell_{\infty}$ with values in a separable measure space $(S,\mathcal G, \lambda)$ and the law of $\ell_{\infty}$ is absolutely continuous with respect to $\lambda$. 
 \end{hypo}
 
 \begin{hypo}[Convergence condition]\label{hypo.conv}
For any starting point $(x,g) \in X \times G$, the process $(g_t)_{t \geq 0}$ converges $\mathbb P_{(x,g)}-$almost surely when $t$ goes to infinity to a random variable $g_{\infty}$ in $G$.
\end{hypo}

\begin{hypo}[Equivariance condition]\label{hypo.cov}The infinitesimal generator $\mathcal L$ of $(x_t,g_t)_{t \geq 0}$ is equivariant under the action of $G$ on $C^{\infty}(X \times G, \mathbb R)$, i.e. $\forall f \in C^{\infty}(X \times G, \mathbb R)$, we have
\[ 
\mathcal L(g \cdot f) = g \cdot (\mathcal L f).
\]
\end{hypo}

\begin{hypo}[Regularity condition]\label{hypo.reg} $\mathcal L-$harmonic functions are continuous on $X \times G$.
\end{hypo}

In the homogeneous case, i.e. given a diffusion $(x_t, y_t)_{t \geq 0}$ on $X \times Y$ where $Y=G/K$ is a co-compact homogeneous space, our hypothesis can be formulated as follows:

\begin{hypo}[Homogeneous case]\label{hypo.homo}
There exists a $K$-right equivariant diffusion $(x_t, g_t)_{t \geq 0}$ in $X \times G$ satisfying Hypotheses 1 to 4 above such that under $\mathbb P_{(x,y)}$ the process  $(x_t, y_t)_{t \geq 0}$ has the same law as  $(x_t, \pi(g_t))_{t \geq 0}$ under $\mathbb P_{(x,g)}$ for $g \in \pi^{-1}(\{ y \})$. 
\end{hypo}

\subsection{Comments on the assumptions}\label{sec.comments}

Let us first remark that Hypotheses \ref{hypo.devissage} and \ref{hypo.conv} ensure that the two sigma fields $\textrm{Inv}((x_t)_{t \geq 0}) $ and $\sigma(g_{\infty})$ appearing in Theorem A are well defined. 
%To highlight the role and importance of each assumption, we now give examples of situations where the d\'evissage method applies or not. 

\subsubsection{On the d\'evissage condition} \label{sec.devis}

The starting point of the d\'evissage method is that the state space $E$ of the original diffusion can be written as $X \times G$ (resp. $X \times G/K$) in an appropriate coordinate system, where the corresponding first projection $(x_t)_{t \geq 0}$ is a sub-diffusion of $(x_t,g_t)_{t \geq 0}$ (resp. $(x_t,y_t)_{t \geq 0}$). This ``splitting property'' occurs in a large number of situations, in particular when considering diffusion processes on manifolds that show some symmetries. 

For example, any left invariant  diffusion $(z_t)_{t \geq 0}$ with values in a semi-simple Lie group $H$ can be decomposed in Iwasawa coordinates as $z_t= n_t a_t k_t$ where $n_t \in N$, $a_t \in A$, $k_t\in K$ take values in Lie subgroups and $(k_t)_{t \geq 0}$ and $(a_t, k_t)_{t \geq 0}$ are sub-diffusions. In other words, the state space can be decomposed as the product of $X=A \times K$ and $G=N$. Under some regularity conditions (see e.g. \cite{liao}), it can be shown that the Poisson boundary of the sub-diffusion $(a_t, k_t)$ is trivial and that $n_t$ converges almost-surely to a random variable $n_\infty \in N$ when $t$ goes to infinity. Thus, our results ensure that the Poisson boundary of the full diffusion $(z_t)_{t \geq 0}$ is generated by the single random variable $n_\infty$.

Another typical situation where the d\'evissage condition is fulfilled is the case of standard Brownian motion on a Riemannian manifold with a warped product structure, a very representative example being the classical hyperbolic space $\mathbb H^d$ seen in polar coordinates $(r,\theta) \in \mathbb R_+^* \times \mathbb S^{d-1}$, i.e. $X =\mathbb R_+^* $ and $G/K = SO(d)/SO(d-1)$. In that case, the radial component $(r_t)_{t \geq 0}$ is a one-dimensional transient sub-diffusion whose Poisson boundary is trivial and the angular component $(\theta_t)_{t \geq 0}$  is a time-changed spherical Brownian motion on $\mathbb S^{d-1}$ that converges almost surely to a random variable $\theta_{\infty} \in \mathbb S^{d-1}$. Again, the d\'evissage method ensures that  the Poisson boundary of the full diffusion is generated by the single random variable $\theta_\infty$. This example generalizes to the case of a standard Brownian motion on a rotationally symmetric manifold, see Sect. \ref{sec.rot}.
\par
\medskip
Otherwise, the hypothesis that the first projection $(x_t)_{t \geq 0}$ is a sub-diffusion of the full diffusion $(x_t,g_t)_{t \geq 0}$ (resp. $(x_t,y_t)_{t \geq 0}$) is convenient and easy to check when considering examples. Nevertheless it is not necessary and the proof of Theorem A actually applies verbatim in certain cases where  the d\'evissage condition is not fulfilled. For example, when $\textrm{Inv}((x_t)_{t \geq 0}) $ is trivial,  what is really needed is the fact that harmonic functions that are a priori constant in the second variable $g \in G$ are in fact constant in both variables $(x,g) \in X \times G$, see Remark \ref{rem.devissage} at the end of the proof of Theorem \ref{theo.trivial} in Sect. \ref{sec.trivial}.
\par
\medskip
Finally, remark that the absolute continuity condition required when $\textrm{Inv}((x_t)_{t \geq 0}) $ is non-trivial, is ensured for example if the infinitesimal generator of the diffusion process $(x_t)_{t \geq 0}$ is hypoelliptic. Moreover, without loss of generality, we can suppose in that case that the measure $\lambda$ on  $(S,\mathcal G)$ is a probability measure, see \cite{kaima}.

\subsubsection{On the equivariance condition}
The main hypothesis that allows to implement the d\'evissage scheme is Hypothesis \ref{hypo.cov} i.e. the equivariance condition. To emphasize its role, let us first consider the following example where the diffusion process $(x_t,g_t)_{t \geq 0}$ with values $X \times G = \mathbb R \times \mathbb R$ is solution of the stochastic differential equations system: 
\begin{equation}
\left \lbrace \begin{array}{l}
\displaystyle{d x_t = dt + e^{-x^2_t} dB_t}, \\ 
\\
 dg_t = e^{-x_t} dt,
\end{array}\right., \quad (x_0, g_0) \in \mathbb R \times \mathbb R, \label{kaima}
\end{equation}
where $(B_t)_{t \geq 0}$ is a standard real Brownian motion. The infinitesimal generator $\mathcal L$ of the diffusion is hypoelliptic, so that Hypothesis \ref{hypo.reg} is fulfilled. Naturally, the process $(x_t)_{t \geq 0}$ is a one dimensional sub-diffusion of $(x_t,g_t)_{t \geq 0}$ and from the Lemma \ref{lem.example} below, Hypothesis \ref{hypo.devissage} is also fulfilled.

\begin{lemma}\label{lem.example}
There exists a real stochastic process $(u_t)_{t \geq 0}$ that converges $\mathbb P_{(x,g)}-$almost surely to a random variable $u_{\infty}$ in $\mathbb R$ when $t$ goes to infinity such that for all $t \geq 0$
\[
x_t = x_0 + t +u_t.
\]
Moreover, the invariant sigma field $\textrm{Inv}((x_t)_{t \geq 0})$ is trivial.
\end{lemma}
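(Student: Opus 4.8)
The plan is to take for $(u_t)_{t\geq 0}$ the obvious candidate, namely $u_t := x_t - x_0 - t$, so that the identity $x_t = x_0 + t + u_t$ holds by construction and only its almost sure convergence has to be established. By the first line of the system \eqref{kaima}, $u_t = \int_0^t e^{-x_s^2}\,dB_s$ is a continuous local martingale whose bracket is $\langle u\rangle_t = \int_0^t e^{-2x_s^2}\,ds \leq t$. The heart of the matter is the estimate
\[
\mathbb{E}_{(x,g)}\big[\langle u\rangle_\infty\big] \;=\; \int_0^\infty \mathbb{E}_{(x,g)}\big[e^{-2x_s^2}\big]\,ds \;<\; \infty ,
\]
which turns $(u_t)$ into an $L^2$-bounded martingale, hence a.s.\ (and $L^2$) convergent to a finite limit $u_\infty$; and then $x_t = x_0 + t + u_t \to +\infty$, so that $(x_t)_{t\geq 0}$ is transient.

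To prove the integrability I would control $\mathbb{E}_{(x,g)}[e^{-2x_s^2}]$ for large $s$ by showing that $x_s$ is unlikely to be small. Writing $x_s = x_0 + s + u_s$, the Dubins--Schwarz theorem gives $u_s = \beta_{\langle u\rangle_s}$ for a standard Brownian motion $\beta$, and since $0 \leq \langle u\rangle_s \leq s$ one gets $|u_s| \leq \sup_{0\leq \tau\leq s}|\beta_\tau|$. Hence, for $s$ larger than some $s_0 = s_0(x_0)$, the reflection principle yields $\mathbb{P}_{(x,g)}(x_s \leq s/2) \leq \mathbb{P}\big(\sup_{\tau\leq s}|\beta_\tau| \geq s/4\big) \leq 4\,e^{-s/32}$, while on the complementary event $e^{-2x_s^2} \leq e^{-s^2/2}$; thus $\mathbb{E}_{(x,g)}[e^{-2x_s^2}] \leq 4e^{-s/32} + e^{-s^2/2}$ for $s\geq s_0$ and is bounded by $1$ otherwise, so the integral above converges.

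For the triviality of $\textrm{Inv}((x_t)_{t\geq 0})$, observe that $(x_t)_{t\geq 0}$ is the autonomous one-dimensional diffusion on $\R$ with generator $\mathcal{A}f = f' + \tfrac12 e^{-2x^2}f''$, an operator with smooth coefficients that is elliptic on $\R$, so its bounded harmonic functions are smooth. Given an invariant event $A$ (of the sub-diffusion $(x_t)$, whose law depends only on the first coordinate of the starting point), put $h(y) := \mathbb{P}_y(A)$; by the Markov property and $\theta_t$-invariance, $h(x_t) = \mathbb{E}_{x_0}[\ind_A \mid \mathcal{F}_t]$ is a bounded martingale converging a.s.\ to $\ind_A$, and $h$ satisfies $\mathcal{A}h = 0$. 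The solutions of $\mathcal{A}h=0$ are the affine combinations of the constants and of the scale function $s$ normalised by $s(0)=0$ and $s'(x) = \exp\!\big(-2\int_0^x e^{2y^2}\,dy\big)$; since $s'(x)\geq 1$ for $x\leq 0$ one has $s(x)\leq x\to-\infty$ as $x\to-\infty$, so the only bounded solutions are constant. Therefore $h$ is constant, and being the almost sure limit of the indicator $\ind_A$ it is a constant in $\{0,1\}$, i.e.\ $\mathbb{P}_{x_0}(A)\in\{0,1\}$; this proves that $\textrm{Inv}((x_t)_{t\geq 0})$ is trivial.

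The main obstacle is precisely the quantitative step $\int_0^\infty \mathbb{E}_{(x,g)}[e^{-2x_s^2}]\,ds < \infty$: a priori the fluctuations of the martingale part $u_s$ could drag $x_s$ back near the origin often enough to make $\langle u\rangle_\infty$ infinite, and it is the combination ``$\langle u\rangle$ accumulates at rate at most $1$'' together with a Gaussian tail bound via Dubins--Schwarz that rules this out. Everything else is routine: the decomposition $x_t = x_0+t+u_t$ is a definition, $L^2$-bounded martingales converge, and the triviality of the invariant field reduces to the elementary ODE analysis of bounded $\mathcal{A}$-harmonic functions recalled above.
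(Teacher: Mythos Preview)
Your proof is correct. The paper argues the same two points but with lighter machinery: for the convergence of $u_t$ it also writes $u_t=\int_0^t e^{-x_s^2}\,dB_s$ with $\langle u\rangle_t\le t$, but instead of bounding $\E[\langle u\rangle_\infty]$ it uses the law of the iterated logarithm (via the same time-change idea) to get $x_t\ge t/2$ eventually almost surely, whence $\langle u\rangle_\infty<\infty$ a.s.\ and $u_t$ converges. Your quantitative route through Dubins--Schwarz and the reflection principle gives the stronger $L^2$ conclusion at essentially no extra cost, and is equally short.

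The genuine difference is in the triviality of $\textrm{Inv}((x_t)_{t\ge0})$. The paper simply notes that $x_t\to+\infty$ a.s.\ and invokes ``standard shift-coupling'' without further detail. Your alternative---identifying bounded $\mathcal{A}$-harmonic functions via the scale function $s$ with $s'(x)=\exp\!\big(-2\int_0^x e^{2y^2}\,dy\big)$ and observing that $s(-\infty)=-\infty$ forces boundedness to mean constancy---is a clean, self-contained ODE argument that avoids any coupling construction. It buys you an explicit proof where the paper leaves a black box; conversely, the paper's shift-coupling viewpoint generalises more readily to higher-dimensional sub-diffusions (and is indeed what they use later for $(\alpha_t,\beta_t,\gamma_t)$ in the Minkowski example), whereas your scale-function argument is specific to one dimension.
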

\begin{proof}[Proof of the lemma]
For all $t \geq 0$, we have 
\[ 
x_t = x_0 + t + u_t, \;\; \hbox{where} \;\; u_t:= \int_{0}^t e^{-x^2_s} dB_s.
\]
The martingale $u_t$ satisfies $\langle u \rangle_t = \int_0^t  e^{- 2 x^2_s}ds \leq  t$ so that from the law of iterated logarithm, we have almost surely $x_t \geq t/2$ for $t$ sufficiently large. In particular, $\langle u \rangle_{\infty} < +\infty$ almost surely and $u_t$ is convergent. Since $x_t$ goes almost surely to infinity with $t$, standard shift-coupling arguments apply and we deduce that $\textrm{Inv}((x_t)_{t \geq 0})$ is trivial. 
Note however that the tail sigma field of $(x_t)_{t \geq 0}$ i.e. the invariant sigma field of the space-time process $\textrm{Inv}((t, x_t)_{t \geq 0})$ is not trivial. Indeed, $x_0 + u_{\infty} = \lim_{t \to +\infty} (x_t-t)$ is a non-trivial shift invariant random variable.
\end{proof}

From Lemma \ref{lem.example} again, the second projection $g_t= g_0 + \int_0^t e^{-x_s}ds$ converges $\mathbb P_{(x,g)}-$almost surely when $t$ goes to infinity to a random variable $g_{\infty}$ in $\mathbb R$ and Hypothesis \ref{hypo.conv} is satisfied.
Finally, considering the action of $G=(\mathbb R,+)$ on itself by translation, Hypothesis \ref{hypo.cov} is also satisfied since, for $f \in C(\mathbb R \times \mathbb R, \mathbb R)$ and $(x,g,h) \in \mathbb R^3$ we have
\[ 
\mathcal L (h \cdot f)(x,g) = (\partial_x f)(x, g+h)  + \frac{1}{2} e^{-x^2} (\partial_x^2 f)(x,g+h) + e^{-x} (\partial_g f)(x,g+h) = h \cdot (\mathcal L f)(x,g).
\]
Hence, from Theorem A, the two sigma fields 
$\textrm{Inv}((x_t, g_t)_{t \geq 0})$ and $\textrm{Inv}((x_t)_{t \geq 0}) \vee \sigma(g_{\infty})= \sigma(g_{\infty})$ 
coincide up to $\mathbb P_{(x,g)}-$negligeable sets i.e. the d\'evissage scheme applies.\par
\medskip
Let us now consider a very similar process, namely the diffusion process $(x_t,g_t)_{t \geq 0}$ with values $X \times G = \mathbb R \times \mathbb R$ which is solution of the new following stochastic differential equations system: 
\begin{equation} \label{kaima2}
 \left \lbrace \begin{array}{l}
\displaystyle{d x_t = dt + e^{-x^2_t} dB_t}, \\
\\
d g_t = - g_t dt,
\end{array}\right., \quad (x_0, y_0) \in \mathbb R \times \mathbb R,
\end{equation}
where $(B_t)_{t \geq 0}$ is a standard real Brownian motion. With a view to apply the d\'evissage method, the context seems favorable because the infinitesimal generator $\mathcal L$ of the diffusion is hypoelliptic, $(x_t)_{t \geq 0}$ is a one dimensional sub-diffusion of $(x_t,g_t)_{t \geq 0}$, and $g_t=g_0 e^{-t}$ converges (deterministically) to $g_{\infty}=0$ when $t$ goes to infinity. In particular, the sigma field $\textrm{Inv}((x_t)_{t \geq 0}) \vee \sigma(g_{\infty})$ is trivial. Nevertheless, we have the following proposition: 

\begin{prop}
Let $(x,g) \in \mathbb R \times \mathbb R$ with $g \neq 0$, then the two sigma fields
$\textrm{Inv}((x_t, g_t)_{t \geq 0})$ and $\textrm{Inv}((x_t)_{t \geq 0}) \vee \sigma(g_{\infty})$ differ by a $\mathbb P_{(x,g)}-$non-negligeable set.
\end{prop}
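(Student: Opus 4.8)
The plan is to produce a single event $A$ lying in $\textrm{Inv}((x_t,g_t)_{t\ge0})$ with $0<\mathbb{P}_{(x,g)}(A)<1$. This is enough: since $g_t=g\,e^{-t}$ one has $g_\infty=0$ surely, and combined with the triviality of $\textrm{Inv}((x_t)_{t\ge0})$ established in Lemma \ref{lem.example}, the sigma field $\textrm{Inv}((x_t)_{t\ge0})\vee\sigma(g_\infty)$ is $\mathbb{P}_{(x,g)}$-trivial, so such an $A$ cannot belong to it modulo $\mathbb{P}_{(x,g)}$-negligible sets. The heuristic is that, although $g_t$ converges, the \emph{rate} $-\log|g_t|=t-\log|g|$ at which it does so furnishes an intrinsic clock — this is exactly where the equivariance condition of Hypothesis \ref{hypo.cov} breaks down for the system \eqref{kaima2}, the drift $-g\,\partial_g$ not being invariant under the translations $g\mapsto g+h$ — and this clock lets a typical path recover the genuinely non-trivial space--time invariant $\lim_{t\to\infty}(x_t-t)=x+u_\infty$ from Lemma \ref{lem.example}.

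To implement this I would set $W(\omega):=\limsup_{t\to\infty}\bigl(\omega^X_t+\log|\omega^G_t|\bigr)$, with the convention $\log 0:=-\infty$, and verify: (i) $W$ is shift invariant, i.e. $W\circ\theta_s=W$ on all of $\Omega$ for every $s>0$, which follows by reindexing the $\limsup$ (a shift of the argument does not change a $\limsup$); (ii) $W$ is $\mathcal F^\infty$-measurable, since for each $r\ge0$ the $\limsup$ depends only on $(x_u,g_u)_{u\ge r}$. Together (i) and (ii) show that $\{W\in C\}\in\textrm{Inv}((x_t,g_t)_{t\ge0})$ for every Borel $C\subseteq\mathbb{R}$. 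Finally, (iii) by Lemma \ref{lem.example}, $\mathbb{P}_{(x,g)}$-a.s. one has $\omega^X_t+\log|\omega^G_t|=(x+t+u_t)+(\log|g|-t)=x+\log|g|+u_t\to x+\log|g|+u_\infty$, hence $W=x+\log|g|+u_\infty$ $\mathbb{P}_{(x,g)}$-almost surely.

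It then remains to show that $u_\infty$ is not $\mathbb{P}_{(x,g)}$-almost surely constant, after which one picks $C$ with $0<\mathbb{P}_{(x,g)}(W\in C)<1$ and takes $A=\{W\in C\}$; I expect this to be the only real step. The martingale $u_t=\int_0^t e^{-x_s^2}\,dB_s$ has bracket $\langle u\rangle_t=\int_0^t e^{-2x_s^2}\,ds\le t$, so by the exponential martingale inequality $\mathbb{P}_{(x,g)}(\sup_{r\le s}|u_r|\ge\lambda)\le 2e^{-\lambda^2/(2s)}$; using this together with the bound $x_s\ge x+s-\sup_{r\le s}|u_r|$ and splitting on whether $\sup_{r\le s}|u_r|\le (x+s)/2$ or not, one obtains for $s$ large $\mathbb{E}_{(x,g)}[e^{-2x_s^2}]\le e^{-(x+s)^2/2}+2e^{-(x+s)^2/(8s)}$, which is integrable in $s$. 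Hence $\mathbb{E}_{(x,g)}[\langle u\rangle_\infty]<\infty$, so $u$ is an $L^2$-bounded, hence uniformly integrable, martingale closed by $u_\infty$; were $u_\infty$ a.s. constant it would equal $u_0=0$, forcing $u\equiv0$, contradicting $\langle u\rangle_t=\int_0^t e^{-2x_s^2}\,ds>0$ for $t>0$. Thus $W$ is non-constant under $\mathbb{P}_{(x,g)}$ and the construction above furnishes the required $A$. The main obstacle is precisely this uniform control of $\mathbb{E}_{(x,g)}[e^{-2x_s^2}]$ in $s$; the remaining points — shift invariance, tail measurability, and the triviality of $\textrm{Inv}((x_t)_{t\ge0})\vee\sigma(g_\infty)$ — are routine bookkeeping.
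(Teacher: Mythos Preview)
Your proposal is correct and follows essentially the same approach as the paper: both exhibit the shift-invariant random variable $\lim_{t\to\infty}(x_t+\log|g_t|)=x+\log|g|+u_\infty$ and argue it is non-trivial, whereas $\textrm{Inv}((x_t)_{t\ge0})\vee\sigma(g_\infty)$ is $\mathbb P_{(x,g)}$-trivial. Your write-up is in fact more careful than the paper's, which simply invokes Lemma~\ref{lem.example} for the non-triviality of $u_\infty$; you supply an explicit argument via $L^2$-boundedness of the martingale $u$ and the strict positivity of its bracket, and your use of $\limsup$ rather than $\lim$ cleanly sidesteps the need to restrict to a full-measure set when checking shift invariance.
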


\begin{proof}
If $g \neq 0$, the sigma field $\textrm{Inv}((x_t, g_t)_{t \geq 0})$ is not trivial under $\mathbb P_{(x,g)}$ because the process $x_t + \log(|g_t|)$ converges $\mathbb P_{(x,g)}-$almost surely to $x_0 + \log(|g_0|)+u_{\infty}$ which, from the proof of Lemma \ref{lem.example}, is a non-trivial invariant random variable.
\end{proof}

The reason for which the d\'evissage method does not apply here is that Hypothesis \ref{hypo.cov} i.e. the equivariance condition is not fulfilled. Indeed, the generator of the full diffusion writes
\[
 \mathcal L = \partial_x + \frac{1}{2} e^{-x^2} \partial_x^2 - g \partial_g, 
 \]
and in general, for $f \in C(\mathbb R \times \mathbb R, \mathbb R)$ and $(x,g,h) \in \mathbb R^3$ we have
\[ 
\begin{array}{cll}
\displaystyle{\mathcal L (h \cdot f)(x,g)} & = & \displaystyle{(\partial_x f)(x, g+h)  + \frac{1}{2} e^{-x^2} (\partial_x^2 f)(x,g+h) - g (\partial_g f)(x,g+h)} \\
 \neq &  & \\
\displaystyle{h \cdot (\mathcal L f)(x,g)} & = &  \displaystyle{(\partial_x f)(x, g+h)  + \frac{1}{2} e^{-x^2} (\partial_x^2 f)(x,g+h) - (g+h) (\partial_g f)(x,g+h)}.
\end{array} 
\]

\subsubsection{On the regularity condition} 

As already noticed in the examples of the last section, the regularity condition is automatically satisfied for a large class of diffusion processes, namely when the infinitesimal generator $\mathcal L$ is elliptic or hypoelliptic. The role of this assumption will be clear at the end of the proof of Theorems \ref{theo.trivial} and \ref{theo.gene}, since it allows to go to the limit in the regularization procedure. In a more heuristical way, 
the regularity condition can be seen as a mixing hypothesis which prevents pathologies that may occur when considering foliated dynamics.
 
To be more precise on the kind of pathologies we have in mind, consider the following deterministic example that was suggested to us by S. Gou\"ezel. The underlying space is the product space $X \times Y= \mathbb S^1 \times \mathbb S^1$ where $\mathbb S^1$ is identified to $\mathbb R / \mathbb Z$. Fix $\alpha \notin \mathbb Q$, and define the transformation $T : X \times Y \to X \times Y$ such that $T(x,y):=(x+\alpha, y)$. Now let $X(x,y):=x$ and  $Y(x,y):=y$ be the first and second projections and for $n \geq 0$ define $X_n:=X \circ T^n$ i.e. $X_n(x,y)=(x+n \alpha, y)$ and $Y_n := Y\circ T^n \equiv Y$.  Now, for $y \in \mathbb S^1$, consider the probability measure 
\[ 
\nu_y:= C \sum_{n \in \mathbb Z} \frac{1}{1+n^2} \delta_{ y  + n\alpha},
\]
where $C$ is a normalizing constant and define a measure $\mathbb P$ on $X \times Y$ such that 
\[
\int_{X \times Y} h(x,y) \mathbb P(dx,dy):= \int_{y \in \mathbb S^1} \left[\int_{x \in \mathbb S^1} h(x,y) \left( \frac{1}{2}  \nu_y(dx) +
\frac{1}{2}\nu_{y+1/2}(dx) \right) \right] dy.
\]
Note that the first marginal $\mathbb P_X(\cdot)=\int_Y \mathbb P(\cdot,dy)$ of $\mathbb P$ is the Lebesgue measure hence the invariant sigma field $\textrm{Inv}((X_n)_{n \geq 0})$ is trivial under $\mathbb P$. Since $Y$ is $T-$invariant, the invariant sigma field $\textrm{Inv}((Y_n)_{n \geq 0})$ is composed of events that do not depend on the first coordinate $x$. Nevertheless, the global invariant sigma field $\textrm{Inv}((X_n,Y_n)_{n \geq 0})$ differs from $\textrm{Inv}((Y_n)_{n \geq 0})$ by a $\mathbb P$-non negligeable event 
%$\textrm{Inv}((X_n,Y_n)_{n \geq 0}) \neq \textrm{Inv}((X_n)_{n \geq 0}) \vee \textrm{Inv}((Y_n)_{n \geq 0}),$
and the d\'evissage scheme does not apply here. Indeed, both sets 
$A:=\{ (y + n \alpha, y), y \in \mathbb S^1, n \in \mathbb Z\}$ and $B:=\{ (y +1/2 + n \alpha, y), y \in \mathbb S^1, n \in \mathbb Z\}$ are invariant and do depend on the first coordinate.

\section{Proof of the main result}\label{sec.proof}
 
\subsection{Starting from a trivial Poisson boundary}\label{sec.trivial}

We first consider the simplest case when the invariant sigma field of $(x_t)_{t \geq 0}$ is trivial and when $Y=G$ is a finite dimensional Lie group. Namely we will first prove the following result:

\begin{theo}\label{theo.trivial}
Suppose that the full diffusion $(x_t,g_t)_{t \geq 0}$ satisfies Hypotheses 1 to 4. Suppose moreover that for all $(x,g) \in X \times G$, the invariant sigma field $\textrm{Inv}((x_t)_{t \geq 0})$ is trivial for the measure $\mathbb P_{(x,g)}$. Then the two sigma fields  
$$\textrm{Inv}((x_t, g_t)_{t \geq 0}) \;\; \textrm{and}\;\; \sigma(g_{\infty})$$ 
coincide up to $\mathbb P_{(x,g)}-$negligeable sets. Equivalently, if $H$ is a bounded $\mathcal L-$harmonic function, there exits a bounded mesurable function $\psi$ on $G$ such that $H(x,g)=\mathbb E_{(x,g)}[\psi(g_{\infty})]$.
\end{theo}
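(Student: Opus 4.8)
The plan is to prove the equivalent analytic statement: every bounded $\mathcal L$-harmonic function $H$ on $X\times G$ is of the form $H(x,g)=\mathbb E_{(x,g)}[\psi(g_\infty)]$ for some bounded measurable $\psi$ on $G$. The reduction is the usual correspondence: for bounded harmonic $H$ the process $(H(x_t,g_t))_{t\ge0}$ is a bounded martingale, so $M_\infty:=\lim_t H(x_t,g_t)$ exists a.s., is $\mathcal F^\infty$-measurable and shift-invariant — hence $\textrm{Inv}((x_t,g_t))$-measurable — and $H(x,g)=\mathbb E_{(x,g)}[M_\infty]$; since $g_\infty=\lim_t g_t$ the inclusion $\sigma(g_\infty)\subseteq\textrm{Inv}((x_t,g_t))$ is clear, so the real content is that $M_\infty$ is a function of $g_\infty$, which the representation $H=\mathbb E_\cdot[\psi(g_\infty)]$ (with $M_\infty=\psi(g_\infty)$) encodes.

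First I would regularize $H$ in the group variable. For a smooth non-negative $\chi$ on $G$ with compact support and $\int_G\chi\,d\mu=1$, set $H_\chi(x,g):=\int_G H(x,hg)\,\chi(h)\,d\mu(h)=\int_G(h\cdot H)(x,g)\,\chi(h)\,d\mu(h)$. By the equivariance condition (Hypothesis \ref{hypo.cov}) each $h\cdot H$ is $\mathcal L$-harmonic, and since $\chi\,d\mu$ is a finite compactly supported measure and the $G$-action commutes with the semigroup of the diffusion (again a consequence of equivariance), $H_\chi$ is bounded and $\mathcal L$-harmonic. Using right-invariance of $\mu$, $H_\chi(x,ag)-H_\chi(x,g)=\int_G H(x,hg)\,[\chi(ha^{-1})-\chi(h)]\,d\mu(h)$, so $|H_\chi(x,ag)-H_\chi(x,g)|\le\|H\|_\infty\,\|R_{a^{-1}}\chi-\chi\|_{L^1(\mu)}\le C_\chi\|H\|_\infty\,d(a,e)$ for a fixed left-invariant distance $d$ on $G$; that is, $g\mapsto H_\chi(x,g)$ is $C_\chi\|H\|_\infty$-Lipschitz for $d$, \emph{uniformly in} $x$. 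Finally, shrinking $\mathrm{supp}\,\chi$ to $\{e\}$ and using continuity of $H$ (Hypothesis \ref{hypo.reg}) gives $H_\chi\to H$ pointwise — this is the single use of the regularity condition.

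The heart is then to identify the boundary limit of $H_\chi$. Since $g_t\to g_\infty$ a.s.\ and $H_\chi$ is uniformly Lipschitz in $g$, $|H_\chi(x_t,g_t)-H_\chi(x_t,g_\infty)|\le C_\chi\|H\|_\infty\,d(g_t,g_\infty)\to0$, hence $M^\chi_\infty:=\lim_t H_\chi(x_t,g_t)=\lim_t H_\chi(x_t,g_\infty)$ a.s. For each fixed $\gamma\in G$ consider $\Lambda(\gamma):=\limsup_t H_\chi(x_t,\gamma)$: as a function on path space it depends only on the $x$-component, lies in its tail $\bigcap_t\sigma(x_s,s\ge t)$ and is shift-invariant, hence is $\textrm{Inv}((x_t))$-measurable and therefore, by the triviality hypothesis, $\mathbb P_{(x,g)}$-a.s.\ equal to a constant. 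This constant does not depend on the starting point: indeed $(x,g)\mapsto\mathbb E_{(x,g)}[\Lambda(\gamma)]$ does not depend on $g$ (the $x$-diffusion is a sub-diffusion) and satisfies the mean-value identity $\mathbb E_{(x,g)}[\Lambda(\gamma)]=\mathbb E_{(x,g)}[\mathbb E_{(x_t,g_t)}[\Lambda(\gamma)]]$ (Markov property and shift-invariance of $\Lambda(\gamma)$), so it is a bounded $\mathcal L$-harmonic function constant in $g$, hence constant by Remark \ref{rem.devissage}; write $c_\chi(\gamma)$ for its value. The uniform Lipschitz bound makes $\gamma\mapsto\Lambda(\gamma)$ and $\gamma\mapsto c_\chi(\gamma)$ Lipschitz, hence continuous; intersecting the countably many null sets $\{\Lambda(\gamma)\ne c_\chi(\gamma)\}$ over a countable dense $D\subseteq G$ and using this continuity, we get that $\mathbb P_{(x,g)}$-a.s.\ $\Lambda(\gamma)=c_\chi(\gamma)$ for \emph{all} $\gamma\in G$ at once; evaluating at the random point $\gamma=g_\infty$ yields $M^\chi_\infty=c_\chi(g_\infty)$ a.s., i.e.\ $H_\chi(x,g)=\mathbb E_{(x,g)}[c_\chi(g_\infty)]$ with $c_\chi$ bounded by $\|H\|_\infty$ and continuous on $G$. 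Letting $\chi\to\delta_e$, so that $H_\chi\to H$ pointwise while the $c_\chi$ stay uniformly bounded, a routine compactness argument produces a bounded measurable $\psi$ with $H(x,g)=\mathbb E_{(x,g)}[\psi(g_\infty)]$, and feeding $M_\infty=\psi(g_\infty)$ back into the correspondence gives $\textrm{Inv}((x_t,g_t))=\sigma(g_\infty)$ up to $\mathbb P_{(x,g)}$-negligible sets.

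The step I expect to be the main obstacle is this identification, where all four hypotheses are genuinely combined: equivariance keeps $H_\chi$ harmonic under the $g$-smoothing, the smoothing itself buys a dependence on the ``frozen'' boundary point $\gamma$ that is Lipschitz uniformly in $x$, the convergence $g_t\to g_\infty$ lets us freeze $g_t$ at its limit, and triviality of $\textrm{Inv}((x_t))$ annihilates the residual $x$-dependence. Inside it, the truly delicate point is upgrading ``a.s.\ for each fixed $\gamma$'' to ``a.s.\ for all $\gamma$ simultaneously'' — without it one could not evaluate $\Lambda$ at the random point $g_\infty$ — and this is exactly what the uniform-in-$x$ regularity gained from the regularization makes possible, so the smoothing is essential rather than a convenience. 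Secondary care is required for the limit $\chi\to\delta_e$ and for the fact that $\psi$ can be chosen independently of the starting point, both handled via the sub-diffusion property and triviality of $\textrm{Inv}((x_t))$ (in the latter case through the equivalence of the harmonic measures of $g_\infty$, as in the references cited).
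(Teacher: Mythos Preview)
Your strategy — regularise $H$ in the $g$-variable, use the resulting uniform Lipschitz control to freeze $g_t$ at $g_\infty$, invoke triviality of $\mathrm{Inv}((x_t))$ to make $\Lambda(\gamma)$ constant, and then evaluate at $\gamma=g_\infty$ via a countable dense set — is sound up to and including the identity $H_\chi(x,g)=\mathbb E_{(x,g)}[c_\chi(g_\infty)]$ with $c_\chi$ bounded continuous. The architecture is close to the paper's, and indeed the delicate point you single out (upgrading ``a.s.\ for each $\gamma$'' to ``a.s.\ for all $\gamma$'') is exactly where the paper uses its own regularisation.

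The genuine gap is your last move. From $H_\chi\to H$ pointwise and $H_\chi=\mathbb E_\cdot[c_\chi(g_\infty)]$ with $\|c_\chi\|_\infty\le\|H\|_\infty$, there is no ``routine compactness'' yielding a single $\psi$ with $H(x,g)=\mathbb E_{(x,g)}[\psi(g_\infty)]$ for \emph{all} $(x,g)$. Weak-$*$ compactness in $L^\infty$ needs a reference measure dominating every hitting law $\mu_{(x,g)}:=\mathrm{law}(g_\infty\,|\,\mathbb P_{(x,g)})$; Hypotheses~1--4 do not provide one, and the ``equivalence of the harmonic measures of $g_\infty$'' you invoke is not a consequence of them either. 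The alternative route — showing $M^\chi_\infty\to M_\infty$ in $L^2$ so that $M_\infty$ lies in the closed subspace $L^2(\sigma(g_\infty))$ — reduces to proving $\mathbb E_{(x,g)}[|Z(h\cdot\omega)-Z(\omega)|^2]\to0$ as $h\to e$, which continuity of harmonic functions alone does not give (the cross term $\mathbb E_{(x,g)}[Z(h\cdot\omega)Z(\omega)]$ is not a harmonic function of $h$).

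The paper avoids this obstacle by a different placement of the regularisation: it first \emph{recenters} by $g_\infty^{-1}$, setting $Z^h(\omega):=Z(h\,\tilde g_\infty^{-1}\!\cdot\omega)$. By Lemma~\ref{key.lemma} the map $(x,g)\mapsto\mathbb E_{(x,g)}[Z^h]$ is then constant in $g$, hence constant outright; this constant \emph{is} $\psi(h)$, defined directly with no extraction needed. The convolution by an approximate unit is only used afterwards, to legitimise the substitution $\mathbf g=\tilde g_\infty(\omega)$, and the final $n\to\infty$ requires nothing more than continuity of the two harmonic functions $H$ and $(x,g)\mapsto\mathbb E_{(x,g)}[\psi(g_\infty)]$, which is exactly Hypothesis~\ref{hypo.reg}. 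In short: you smooth $H$ first and then try to pass to the limit in the boundary function; the paper fixes the boundary function first and only smooths to justify a pointwise evaluation, so its limit is a convolution against a \emph{fixed} continuous integrand and is genuinely routine.
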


\begin{proof}[Proof of Theorem \ref{theo.trivial}]
The first step of the proof is the following lemma, which is valid under Hypotheses 1 to 4 (the triviality of $\textrm{Inv}((x_t)_{t \geq 0})$ is not required here). From Hypothesis \ref{hypo.conv}, for all $(x,g) \in X \times G$, the process $(g_t)_{t \geq 0} $ converges $\mathbb P_{(x,g)}-$almost surely to a random variable $g_{\infty}=g_{\infty}(\omega)$ in $G$. 
\begin{lemma}\label{key.lemma}
Under Hypotheses 1 to 4, and for all starting points $(x,g) \in X \times G$ and $h \in G$, the law of the process $h.(x_t,g_t)_{t \geq 0 }=(x_t,h.g_t)_{t \geq 0 }$ under $\mathbb P_{(x, g)}$ coincides with the law of $(x_t,g_t)_{t \geq 0 }$ under $\mathbb P_{(x,h.g)}$. In particular,
\begin{enumerate}
\item the law of the limit $g_{\infty}$ under $\mathbb P_{(x,h.g)}$ is the law of $h. g_{\infty}$ under $\mathbb P_{(x, g)}$;
\item the push-forward measures of both measures $\mathbb P_{(x, g)}$ and $\mathbb P_{(x,h.g)}$ under the mesurable map 
$\omega=(\omega^X, \omega^G) \mapsto h g_{\infty}^{-1}.\omega=(\omega^X, h g_{\infty}^{-1}(\omega).\omega^G)$ coincide.
\end{enumerate}
\end{lemma}
\begin{proof}[Proof of Lemma \ref{key.lemma}]
The result is an direct consequence of the covariance hypothesis \ref{hypo.cov}. Indeed, if $f \in C^{\infty}(X\times G, \mathbb R)$ is compactly supported, from It\^o's formula, under $\mathbb P_{(x,g)}$ we have for all $h \in G$:
$$\begin{array}{ll}
\displaystyle{f(x_t,h.g_t)} & \displaystyle{=(h \cdot f)(x_t,g_t) =(h \cdot f)(x,g) + \int_0^t \mathcal L \left(  h \cdot f\right)(x_s,g_s)ds+M_t} \\
\\
& \displaystyle{=f(x,h.g) +\int_0^t h \cdot \left( \mathcal L  f\right)(x_s,g_s)ds+M_t}\\
\\
& \displaystyle{=f(x,h.g) +\int_0^t  \left(\mathcal L  f\right)(x_s,h .g_s)ds+M_t,}
\end{array}$$
where $M_t$ is a martingale vanishing at zero. Otherwise, under $\mathbb P_{(x,h.g)}$ we have:
$$f(x_t,g_t)=f(x,h.g) + \int_0^t (\mathcal L f)(x_s,g_s) ds +N_t,$$
where $N_t$ is again a martingale vanishing at zero. In other words, under $\mathbb P_{(x, g)}$ and $\mathbb P_{(x,h.g)}$ respectively, both processes $h.(x_t,g_t)_{t \geq 0 }$ and $(x_t,g_t)_{t \geq 0 }$ solve the same martingale problem, hence their laws coincide. 
\end{proof}

Let us go back to the proof of Theorem \ref{theo.trivial}. From Hypothesis \ref{hypo.conv}, for all $(x,g) \in X \times G$, the process $(g_t)_{t \geq 0} $ converges $\mathbb P_{(x,g)}-$almost surely to a random variable $g_{\infty}=g_{\infty}(\omega)$ in $G$. We define
$$\Omega_0^{(x,g)}:= \{\omega \in \Omega, \, \lim_{t \to +\infty} g_t(\omega) \, \hbox{exists}\},$$
and consider the new variable $\tilde{g}_{\infty}$ such that $\tilde{g}_{\infty}:=g_{\infty}$ on $\Omega_0^{(x,g)}$ and $\tilde{g}_{\infty}:=\textrm{Id}_G$ on $\Omega \backslash\Omega_0^{(x,g)}.$
Let $H$ be a bounded $\mathcal L-$harmonic function. There exists a bounded $\textrm{Inv}((x_t,g_t)_{t \geq 0})-$measurable random variable $Z : \Omega \to \mathbb R$, i.e. $Z$ is $\mathcal F^{\infty}-$measurable and satisfies $Z( \theta_s \omega) = Z(\omega)$ for all $\omega \in \Omega$, such that $\forall \, (x,g) \in X \times G$ :
\[
H(x,g) = \mathbb{E}_{(x,g)} [ Z ]. 
\]
Moreover, $(x, g) \in X \times G$ being fixed, for $\Prob_{(x,g)}-$almost all paths $\omega$, we have: 
$$Z(\omega)= \lim_{t \to +\infty} H(x_t(\omega), g_t(\omega)).$$
\noindent
For $h \in G$, consider the new random variable
$$ Z^h(\omega)  := Z( h.\tilde{g}_{\infty}^{-1}.\omega) =Z( \omega^X, h \tilde{g}_\infty(\omega)^{-1}\omega^G ).$$
The variable $Z^h $ is again $\textrm{Inv}((x_t,g_t)_{t \geq 0})-$measurable. Indeed, since the constant function equal to $h$ and the random variable $Z$ are shift-invariant, we have 
$$ Z ( h.\tilde{g}_{\infty}^{-1}(\theta_s \omega).\theta_s \omega) = Z (\theta_s (h.\tilde{g}_{\infty}^{-1}.\omega))= Z(  h.\tilde{g}_{\infty}^{-1}.\omega).$$
Since $Z^h$ is a bounded $\textrm{Inv}((x_t,g_t)_{t \geq 0})-$measurable variable, the function $(x,g) \mapsto \mathbb E_{(x,g)} [Z^{h}]$ is also a bounded $\mathcal L-$harmonic function. But from the second point of Lemma \ref{key.lemma}, for all starting points $(x,g, g') \in X \times G^2$, we have 
$$\mathbb{E}_{(x,g)} [ Z^{h}  ]= \mathbb{E}_{(x,g')} [ Z^{h} ].$$
In other words, the harmonic function $(x, g) \mapsto \mathbb{E}_{(x,g)} [ Z^{h}  ]$ is constant in $g$ and its restriction to $X$ is $\mathcal L^X-$harmonic. Since $\textrm{Inv}((x_t)_{t \geq 0})$ is supposed to be trivial, we deduce that the function $(x, g) \mapsto \mathbb{E}_{(x,g)} [ Z^{h}  ]$ is constant. In the sequel, we will denote by $\psi(h)$ the value of this constant. Note that $h \mapsto \psi(h)$ is a bounded measurable function since $h \mapsto Z^h$ is.
Let us now introduce an approximate unity $(\rho_n)_{n \geq 0}$ on $G$, fix $\mathbf{g} \in G$, $n \in \mathbb N$ and consider the ``conditionned and regularized'' version $Z$, namely:
$$Z^{ \mathbf{g},n}(\omega):= \int_{G} Z^h(\omega) \rho_n( \mathbf{g}h^{-1})\mu(dh). $$
The exact same reasoning as above shows that $Z^{ \mathbf{g},n}$ is a bounded $\textrm{Inv}((x_t,g_t)_{t \geq 0})-$measurable variable so that the function  $(x, g) \mapsto \mathbb{E}_{(x,g)} [ Z^{\mathbf{g},n}  ] $ is constant. Hence, for all $\mathbf{g} \in G$, $n \in \mathbb N$ and $(x,g) \in X \times G$, there exists a set  $\Omega^{\mathbf{g},n,(x,g)} \subset \Omega$ such that $\Prob_{(x,g)}(\Omega^{\mathbf{g},n,(x,g)} ) =1$ and such that for all paths $\omega$ in $\Omega^{\mathbf{g},n,(x,g)}$, we have:
$$\begin{array}{ll}
Z^{\mathbf{g},n}  (\omega) &= \lim_{t \to \infty}  \mathbb{E}_{(x_t(\omega),g_t(\omega))} [ Z^{\mathbf{g},n}   ] \\
\\
 &= \mathbb{E}_{(x_0(\omega),g_0(\omega))} [ Z^{\mathbf{g},n}   ] \\
 \\
 &= \mathbb{E}_{(x,g)} [Z^{\mathbf{g},n}   ].
\end{array}$$
Let $D$ be a countable dense set in $G$ and consider the intersection
 \[
 \Omega^{(x,g)} := \underset{\mathbf{g} \in D, n \in \N}{\bigcap } \Omega^{\mathbf{g},n, (x,g)}.
 \]
We have naturally $\Prob_{(x,g)} ( \Omega^{(x,g)} )=1$ and for $\omega \in \Omega^{(x,g)}$:
\[
\forall \mathbf{g} \in D, \ n \in \N, \quad Z^{\mathbf{g},n} (\omega) = \mathbb{E}_{(x,g)} [ Z^{\mathbf{g},n} ].
\]
Since the above expressions are continuous in $\mathbf{g}$, we deduce that the last inequality is true for all $\mathbf{g} \in G$. In other words, we have 
shown that for all $\mathbf{g} \in G$ and for all $ \omega $ in $\Omega^{(x,g)}$:
\[
 Z^{\mathbf{g},n} (\omega)  =\mathbb{E}_{(x,g)} [ Z^{\mathbf{g},n} ]= \int_G \psi(h ) \rho_n(\mathbf{g}h^{-1})\mu(dh).
\]
In particular, taking $\mathbf{g}= \tilde{g}_\infty (\omega)$, we obtain that for all $\omega \in \Omega^{(x,g)}$ and for all $n \in \N$:
\begin{equation}\label{eq.chgvar}
\displaystyle{Z^{\tilde{g}_{\infty}(\omega),n} (\omega)}  =  \displaystyle{\int_G \psi(h ) \rho_n(\tilde{g}_{\infty}(\omega)h^{-1})\mu(dh)}.
\end{equation}
Recall that the Haar measure $\mu$ is right invariant so that 
\[
\displaystyle{Z^{\tilde{g}_{\infty}(\omega),n} (\omega)}   = \int_{G} Z^h(\omega) \rho_n( g_{\infty}(\omega) h^{-1})\mu(dh)  = \displaystyle{\int_G Z(h.\omega) ) \rho_n(h^{-1})\mu(dh) }, 
\]
and
\[
 \displaystyle{\int_G \psi(h ) \rho_n(\tilde{g}_{\infty}(\omega)h^{-1})\mu(dh)}= \displaystyle{\int_G \psi(h \tilde{g}_{\infty}(\omega)) \rho_n(h^{-1})\mu(dh)}.
\]
Thus, Equation (\ref{eq.chgvar}) is equivalent to
\[
\displaystyle{\int_G Z(h.\omega)  \rho_n(h^{-1})\mu(dh)} =\displaystyle{\int_G \psi(h \tilde{g}_{\infty}(\omega)) \rho_n(h^{-1})\mu(dh)}.
\]
Taking the integral in $\omega$ with respect to $\Prob_{(x,g)}$ on $\Omega^{(x,g)}$, we deduce that for all $ n \in \N$:
\[
\displaystyle{\int_G \mathbb{E}_{(x,g)} [ Z(h.\omega) ]\rho_n(h^{-1})\mu(dh)} =   \displaystyle{\int_G \mathbb{E}_{(x,g)} [ \psi(h\tilde{g}_{\infty}) ] \rho_n(h^{-1})\mu(dh) },
\]
which, from Lemma \ref{key.lemma} yields
$$\displaystyle{\int_G H(x, h g ) \rho_n(h^{-1})\mu(dh) } =  \displaystyle{\int_G \mathbb{E}_{(x,hg)} [ \psi(\tilde{g}_\infty )  ] \rho_n(h^{-1})\mu(dh).}$$
From Hypothesis \ref{hypo.reg}, bounded $ \mathcal L-$harmonic functions are continuous, hence we can let $n$ go to infinity in the above expressions to get the desired result, namely:
$$H(x,g) = \mathbb{E}_{(x,g)} [\psi(g_\infty)].$$

\end{proof}

\begin{remark}\label{rem.devissage}
As already noticed in Sect. \ref{sec.devis}, the Hypothesis that $(x_t)_{t \geq 0}$ is a sub-diffusion of $(x_t, g_t)_{t \geq 0}$ is not necessary. In the proof above, this assumption is only used to ensure that the function $(x, g) \mapsto \mathbb{E}_{(x,g)} [ Z^{h}  ]$, which is constant in $g$, is in fact also constant in $x$, since its restriction to $X$ is $\mathcal L^X-$harmonic. Hence, if one knows a priori that harmonic functions that are constant in the second variable $g \in G$ are in fact constant in both variables $(x,g) \in X \times G$, the end of the proof applies verbatim. This fact allows to implement the d\'evissage method in some situations, where Hypothesis \ref{hypo.devissage} is not satisfied. Consider for example the diffusion process $(x_t, g_t)_{t \geq 0}$ with values in $\mathbb R \times \mathbb R$ which is solution of the stochastic differential equations system:
\begin{equation}\label{kaima3}
\left \lbrace \begin{array}{l}
\displaystyle{d x_t = (1+ g_t) dt +e^{-x^2_t} dB_t}, \\
\\
d g_t = e^{-x_t} dt,
\end{array}\right., \quad (x_0, g_0) \in \mathbb R \times \mathbb R^+,
\end{equation}
where $(B_t)_{t \geq 0}$ is a standard real Brownian motion. In that case, $(x_t)_{t \geq 0}$ is not a sub-diffusion of $(x_t, g_t)_{t \geq 0}$ so that Hypothesis \ref{hypo.devissage} is not fulfilled. Nevertheless, Hypotheses \ref{hypo.conv}-\ref{hypo.reg} are satisfied and one easily checks that $\mathcal L-$harmonic functions that are constant in $g$ are also constant in $x$, and we can conclude that $\textrm{Inv}((x_t, g_t)_{t \geq 0})$ and $\sigma(g_{\infty})$ coincide up to $\mathbb P_{(x_0,g_0)}-$negligeable sets.
\end{remark}

\subsection{Starting from a non-trivial Poisson boundary}\label{sec.gene}

Let us now consider the general case when $\textrm{Inv}((x_t)_{t \geq 0})$ is not trivial but generated by a random variable $\ell_{\infty}$ with values in a separable measure space $(S, \mathcal G)$. We will prove the following result:
\begin{theo}\label{theo.gene}
Suppose that the full diffusion $(x_t,g_t)_{t \geq 0}$ satisfies Hypotheses 1 to 4. Then, for all starting points $ (x,g) \in X \times G$, the two sigma fields  
$$\textrm{Inv}((x_t, g_t)_{t \geq 0}) \;\; \textrm{and}\;\; \sigma(\ell_{\infty},g_{\infty})$$ 
coincide up to $\mathbb P_{(x,g)}-$negligeable sets. Equivalently, if $H$ is a bounded $\mathcal L-$harmonic function, there exits a bounded mesurable function $\psi$ on $S\times G$ such that $H(x,g)=\mathbb E_{(x,g)}[\psi(\ell_{\infty},g_{\infty})]$.
\end{theo}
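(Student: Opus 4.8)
The plan is to run the proof of Theorem \ref{theo.trivial} essentially verbatim, the only substantial change occurring at the one place where the triviality of $\textrm{Inv}((x_t)_{t\geq 0})$ was used. As there, I would fix a bounded $\mathcal L$-harmonic function $H$, represent it as $H(x,g)=\mathbb E_{(x,g)}[Z]$ for a bounded $\textrm{Inv}((x_t,g_t)_{t\geq 0})$-measurable $Z$, extend $g_\infty$ to a variable $\tilde g_\infty$ defined on all of $\Omega$, and introduce the invariant variables $Z^h(\omega):=Z(h.\tilde g_\infty^{-1}.\omega)$ together with their regularizations $Z^{\mathbf g,n}(\omega):=\int_G Z^h(\omega)\rho_n(\mathbf g h^{-1})\mu(dh)$ for an approximate unit $(\rho_n)_{n\geq 0}$ on $G$. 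Exactly as in Theorem \ref{theo.trivial}, these are bounded and $\textrm{Inv}((x_t,g_t)_{t\geq 0})$-measurable, and Lemma \ref{key.lemma}(2) shows that $(x,g)\mapsto\mathbb E_{(x,g)}[Z^h]$ does not depend on the $G$-variable $g$, hence descends (using that $(x_t)_{t\geq 0}$ is a sub-diffusion) to a bounded $\mathcal L^X$-harmonic function on $X$.

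The new point is that, instead of concluding that this function is constant, I would invoke Hypothesis \ref{hypo.devissage}: since the Poisson boundary of $(x_t)_{t\geq 0}$ is generated by $\ell_\infty$, every bounded $\mathcal L^X$-harmonic function is of the form $x\mapsto\mathbb E^X_x[\psi(\ell_\infty,h)]$ for a bounded measurable $\psi(\cdot,h)$ on $S$, so that $H^{\mathbf g,n}(x,g):=\mathbb E_{(x,g)}[Z^{\mathbf g,n}]=\mathbb E^X_x\big[\int_G\psi(\ell_\infty,h)\rho_n(\mathbf g h^{-1})\mu(dh)\big]$, again independent of $g$. Letting $t\to+\infty$ in the martingale $H^{\mathbf g,n}(x_t,g_t)$, and using that $\ell_\infty$ depends only on $\omega^X$ so that conditioning on $\sigma(x_s,s\geq 0)$ recovers it, one gets $Z^{\mathbf g,n}(\omega)=\int_G\psi(\ell_\infty(\omega),h)\rho_n(\mathbf g h^{-1})\mu(dh)$ for $\mathbb P_{(x,g)}$-almost every $\omega$. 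I would then intersect these full-measure sets over a countable dense set of $\mathbf g$'s, extend to all $\mathbf g\in G$ by continuity, specialize to $\mathbf g=\tilde g_\infty(\omega)$, and perform the same change of variables (right-invariance of $\mu$) as in Theorem \ref{theo.trivial} to obtain $\int_G Z(h.\omega)\rho_n(h^{-1})\mu(dh)=\int_G\psi(\ell_\infty(\omega),h\tilde g_\infty(\omega))\rho_n(h^{-1})\mu(dh)$. Integrating in $\omega$ and applying Lemma \ref{key.lemma} (the $G$-action fixes $\omega^X$, hence $\ell_\infty$, while sending $g_\infty$ to $h.g_\infty$) yields $\int_G H(x,hg)\rho_n(h^{-1})\mu(dh)=\int_G\mathbb E_{(x,hg)}[\psi(\ell_\infty,g_\infty)]\rho_n(h^{-1})\mu(dh)$; both sides being bounded $\mathcal L$-harmonic, Hypothesis \ref{hypo.reg} permits passage to the limit $n\to+\infty$, giving $H(x,g)=\mathbb E_{(x,g)}[\psi(\ell_\infty,g_\infty)]$. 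The reverse inclusion $\sigma(\ell_\infty,g_\infty)\subseteq\textrm{Inv}((x_t,g_t)_{t\geq 0})$ is immediate, since both $\ell_\infty$ and $g_\infty$ admit shift-invariant, tail-measurable representatives.

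The step I expect to be the main obstacle is the measurable-selection issue hidden in "$\mathbb E_{(x,g)}[Z^h]=\mathbb E^X_x[\psi(\ell_\infty,h)]$": one must produce a \emph{single} bounded $\mathcal G\otimes\mathcal B(G)$-measurable function $\psi:S\times G\to\mathbb R$ that works simultaneously for all $h$. This is exactly where the separability of $(S,\mathcal G,\lambda)$ and the absolute continuity of the law of $\ell_\infty$ with respect to $\lambda$ are used: the map $h\mapsto\mathbb E_{(x,g)}[Z^h]$ is measurable, the correspondence between bounded $\mathcal L^X$-harmonic functions and bounded functions of $\ell_\infty$ is a bijection up to $\lambda$-null sets, and the resulting measurable map $h\mapsto\psi(\cdot,h)\in L^1(S,\lambda)$ lifts to a jointly measurable function by a standard argument. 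Minor bookkeeping is also needed on the $\mathbb P_{(x,g)}$-null set where $\lim_t g_t$ does not exist, where $\tilde g_\infty$ (and a similarly extended $\tilde\ell_\infty$) replace $g_\infty$ and $\ell_\infty$.

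A conceptually cleaner alternative would be to condition on $\ell_\infty$: for $\lambda$-almost every $\ell$, the measure $\mathbb P^\ell_{(x,g)}:=\mathbb P_{(x,g)}(\,\cdot\mid\ell_\infty=\ell)$ is a Doob $h$-transform of $\mathbb P_{(x,g)}$ that only alters the $X$-dynamics, under which $\textrm{Inv}((x_t)_{t\geq 0})$ becomes trivial while equivariance (Hypothesis \ref{hypo.cov}) and regularity (Hypothesis \ref{hypo.reg}) persist precisely because $\ell_\infty$ depends only on $\omega^X$ and is untouched by the $G$-action; one would then apply Theorem \ref{theo.trivial} to each $\mathbb P^\ell_{(x,g)}$ and integrate over $\ell$. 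This route, however, requires verifying that the conditioned process is again a genuine diffusion satisfying Hypotheses 1--4, so I would keep the direct modification above as the primary argument.
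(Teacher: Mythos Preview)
Your proposal is correct and follows essentially the same route as the paper: the same variables $Z^h$ and $Z^{\mathbf g,n}$, the identification of $\mathbb E_{(x,g)}[Z^h]$ as $\mathcal L^X$-harmonic (hence a function of $\ell_\infty$), the countable-density/continuity argument with specialization to $\mathbf g=\tilde g_\infty(\omega)$, and the final passage to the limit via Hypothesis~\ref{hypo.reg}. You have also correctly isolated the one genuinely new point---the joint measurability of $(\ell,h)\mapsto\psi(\ell,h)$---which the paper handles by building a measurable family of probabilities $\mathbb Q_h\ll k(x_0,\ell)\lambda(d\ell)$ and invoking Theorem~58, p.~57 of Dellacherie--Meyer~\cite{dellmeyer} to select a jointly measurable density; your Doob-conditioning alternative is likewise one the authors explicitly contemplated but set aside in favor of the direct adaptation.
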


\if{\begin{remark}
\textcolor{red}{Une approche possible serait le conditionnement par Doob, et d'appliquer le théorème 1. Pour les besoins de la cause, nous donnons une preuve directe en adaptant la preuve précédente}
\end{remark}
}\fi

\begin{proof}The proof is very similar to the one of Theorem \ref{theo.trivial}, but it requires an extra argument to ensure the mesurability of the function $\psi$. So let $H$ be a bounded $\mathcal L-$harmonic function and $Z : \Omega \to \mathbb R$ the associated bounded $\textrm{Inv}((x_t,g_t)_{t \geq 0})-$measurable random variable.  For $\mathbf g, h \in G$ and $n \in \mathbb N$, we consider the random variables
$$ \begin{array}{ll}   Z^h(\omega) & := Z( h.\tilde{g}_{\infty}^{-1}.\omega), \\
\\
\displaystyle{Z^{ \mathbf{g},n}(\omega)} & := \displaystyle{\int_{G} Z^h(\omega) \rho_n( \mathbf{g}h^{-1})\mu(dh). }\end{array}
$$

As in the proof of Theorem \ref{theo.trivial}, the element $h$ being fixed,  the variable $Z^h$ is bounded and $\textrm{Inv}((x_t,g_t)_{t \geq 0})-$measurable, so that the function $(x,g) \mapsto \mathbb E_{(x,g)} [Z^{h}]$ is bounded and $\mathcal L-$harmonic. From Lemma \ref{key.lemma},  this function is constant in $g$ and its restriction to $X$ is thus $\mathcal L^X-$harmonic. Hence, there exists a bounded mesurable function $\psi_h : S \to \mathbb R$ such that 
\begin{equation}\forall (x,g) \in X\times G, \; \;\mathbb E_{(x,g)} [Z^{h}] = \mathbb E_{(x,g)} [\psi_h(\ell_{\infty})].\label{1}\end{equation}
In the same way, for any $A \in \mathcal G$, we have:
 \begin{equation}\E_{x,g} [ \ind_{\ell_\infty \in A } Z^h ] = \E_{x,g} [ \ind_{\ell_\infty \in A} \phi^h(\ell_\infty) ] = \int \ind_{A}(\ell) \psi_h(\ell) k(x,\ell) \lambda(d\ell).  \label{2} \end{equation}
Let us fix $x_0 \in X$ and define 
 $$\mathbb Q_h(d \ell):= \frac{\psi_h(\ell)}{\mathbb E_{(x_0,g)}[Z^h]} k(x_0, \ell) \lambda(d \ell).$$
For each $h$, $\mathbb Q_h$ is absolutely continuous with respect to $k(x_0, \ell) \lambda(d \ell)$ and, by \eqref{2}, $(\mathbb{Q}_h)_h$ is a mesurable family of probability measures. Since $(S, \mathcal G)$ is separable, Theorem 58 p. 57 of \cite{dellmeyer} applies and there exists a measurable map $X : S \times G \to \mathbb R$ such that $X(.,h)$ is a density of $\mathbb Q_h$ with respect to $k(x_0, \ell) \lambda(d \ell)$, i.e. for all $h \in G$
$$ X(\ell,h)  = \frac{\psi_h(\ell)}{\mathbb E_{(x_0,g)}[Z^h]} \;\; \hbox{for} \;\; \lambda-\hbox{almost all}  \; \ell.$$
The map $h \mapsto \mathbb E_{(x_0,g)} [Z^h]$ being measurable, the function
$$\widetilde{\psi}(\ell,h):= X(\ell ,h) \mathbb E_{(x_0,g)}[Z^h],$$ 
is measurable and for all $(x,g) \in X\times G$:
\begin{equation}
\mathbb E_{(x,g)} [Z^{h}] = \mathbb E_{(x,g)} [\widetilde{\psi}(\ell_{\infty},h)].
\end{equation}
For all $\mathbf{g} \in G$, $n \in \mathbb N$ and $(x,g) \in X \times G$, we thus have:
\begin{align*}
\mathbb{E}_{(x,g)} [Z^{\mathbf{g}, n}] &= \int_G \mathbb{E}_{(x,g)} [ Z^h] \rho_n(\mathbf{g}h^{-1})\mu(dh) \\
&= \int_G\mathbb{E}_{(x,g)}  [ \widetilde{\psi}(\ell_{\infty}, h) ] \rho_n(\mathbf{g}h^{-1})\mu(dh)\\
&= \mathbb{E}_{(x,g)}  \left [  \int_G  \widetilde{\psi}(\ell_{\infty}, h) \rho_n(\mathbf{g}h^{-1})\mu(dh) \right ].
\end{align*}
Hence, $(x,g) \in X \times G$ being fixed, both random variables $Z^{\mathbf{g}, n}$ and $ \int_G \widetilde{\psi}(\ell_{\infty},h) \rho_n(\mathbf{g}h^{-1})\mu(dh)$ coincide on a set $\Omega^{\mathbf{g},n,(x,g)} \subset \Omega$ such that $\mathbb P_{(x,g)} (\Omega^{\mathbf{g},n,(x,g)})=1$. As before, if $D$ is a countable dense set in $G$, for all $\omega \in  \Omega^{(x,g)} := \cap_ {\mathbf{g} \in G, n \in \mathbb N} \Omega^{\mathbf{g},n,(x,g)}$:
\[
\forall \mathbf{g} \in D, n \in\mathbb{N}, \quad Z^{\mathbf{g}, n}(\omega) = \int_G  \widetilde{\psi}( \ell_\infty(\omega),h) \rho_n(\mathbf{g} h^{-1}) \mu(dh).
\]
The above expressions being continuous in  $\mathbf{g}$, we can take  $\mathbf{g}=\tilde{g}_\infty(\omega)$ to get 
\begin{align*}
\forall \omega \in \Omega^{(x,g)}, \forall n \in \mathbb{N}, \quad  Z^{\tilde{g}_\infty(\omega),n}(\omega) &= \int_G \widetilde{\psi} (\ell_\infty(\omega),h) \rho_n(\tilde{g}_\infty(\omega) h^{-1}) \mu(dh)  \\
&= \int_G \widetilde{\psi}(\ell_\infty(\omega), h \tilde{g}_\infty(\omega)) \rho_n(h^{-1}) \mu(dh).
\end{align*}
Taking the expectation under $\Prob_{(x,g)}$, the left hand side gives : 

\[
\begin{array}{ll}
\displaystyle{\mathbb{E}_{(x,g)} [Z^{\tilde{g}_{\infty},n}] } & = \displaystyle{\int_{\Omega} \left[   \int_G Z^{h}(\omega) \rho_n(\tilde{g}_{\infty} (\omega)h^{-1}) \mu(dh)    \right] \mathbb P_{(x,g)}(d\omega)}\\
\\
 & = \displaystyle{ \int_{\Omega}\left[   \int_G Z^{h \tilde{g}_{\infty}(\omega)} (\omega)  \rho_n( h^{-1}) \mu(dh)    \right] \mathbb P_{(x,g)}(d\omega)}\\
 \\
  & =  \displaystyle{\int_{\Omega}\left[   \int_G Z ( h.\omega)  \rho_n( h^{-1}) \mu(dh)    \right] \mathbb P_{(x,g)}(d\omega)}\\
  \\
    & =\displaystyle{  \int_{\Omega}\left[   \int_G Z ( \omega)  \rho_n(h^{-1}) \mu(dh)    \right] \mathbb P_{(x,h g)}(d\omega)}\\
  \\
& = \displaystyle{\int_G \mathbb{E}_{(x,h g)} [Z] \rho_n(h^{-1})\mu(dh)}
\end{array}
\]
and the right hand side
\[
\begin{array}{ll}
 \displaystyle{ \mathbb{E}_{(x,g)} \left[ \int_G \widetilde{\psi} (\ell_{\infty}, h \tilde{g}_\infty)  \rho_n(h^{-1})\mu(dh) \right]} & = \displaystyle{\int_G \mathbb{E}_{(x,g)} \left[ \widetilde{\psi} (\ell_{\infty}, h \tilde{g}_\infty) \right] \rho_n(h^{-1})\mu(dh)}\\
\\
&  = \displaystyle{\int_G \mathbb{E}_{(x,hg)} \left[ \widetilde{\psi} (\ell_{\infty}, \tilde{g}_\infty) \right] \rho_n(h^{-1})\mu(dh)}
\end{array}
\]
Since $\mathcal L-$harmonic functions are continuous, letting $n$ go to infinity, we deduce
\[
\mathbb{E}_{(x,g)} [Z ] = \mathbb{E}_{(x,g)} [\widetilde{\psi}(\ell_{\infty}, \tilde{g}_\infty)].
\]
\end{proof}

\subsection{Extension to homogeneous manifolds}\label{sec.homo}

Consider $K$ a compact sub-group of $G$, denote by $Y:= G/K$ the homogenous space associated and $\pi : G \to G/K$ the canonical projection. Let $(x_t, y_t)$ be  a diffusion on $X \times Y$. 

\begin{theo}\label{theo.homo}
Suppose that the full diffusion $(x_t,y_t)_{t \geq 0}$ satisfies Hypothesis 5 of Sect. \ref{sec.hypo}, then for all starting points $(x,y) \in X \times Y$, the two sigma fields  
$$\textrm{Inv}((x_t, y_t)_{t \geq 0}) \;\; \textrm{and}\;\; \textrm{Inv}((x_t)_{t \geq 0}) \vee \sigma(y_{\infty})$$ 
coincide up to $\Prob_{x,y}$-negligeable sets. 
\end{theo}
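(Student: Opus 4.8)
The plan is to lift the question from $X\times Y$ to $X\times G$, invoke Theorem \ref{theo.gene} there, and then push the result back down to $X\times Y=X\times G/K$ by averaging over the compact group $K$. First note that $y_\infty$ is well defined: since $\pi$ is continuous and $g_t\to g_\infty$ $\mathbb P_{(x,g)}$-almost surely by Hypothesis \ref{hypo.conv}, the process $\pi(g_t)$ converges almost surely, and transporting this through the law identity of Hypothesis \ref{hypo.homo} shows that $(y_t)_{t\geq 0}$ converges $\mathbb P_{(x,y)}$-almost surely to a limit $y_\infty$, which we identify with $\pi(g_\infty)$. The inclusion $\sigma(\ell_\infty,y_\infty)\subseteq \textrm{Inv}((x_t,y_t)_{t\geq0})$ (up to $\mathbb P_{(x,y)}$-negligeable sets) is routine, $\ell_\infty$ being a shift-invariant functional of the $X$-component alone and $y_\infty$ a shift-invariant tail variable, so the whole content is the reverse inclusion: every bounded harmonic function $H$ on $X\times Y$ (i.e. such that $H(x_t,y_t)_{t\geq0}$ is a bounded $\mathbb P_{(x,y)}$-martingale for every starting point) is of the form $H(x,y)=\mathbb E_{(x,y)}[\psi(\ell_\infty,y_\infty)]$ for some bounded measurable $\psi$ on $S\times Y$.

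Given such an $H$, set $\widetilde H(x,g):=H(x,\pi(g))$. For any $(x,g)$ with $y=\pi(g)$, Hypothesis \ref{hypo.homo} gives that $(x_t,\pi(g_t))_{t\geq0}$ under $\mathbb P_{(x,g)}$ has the law of $(x_t,y_t)_{t\geq0}$ under $\mathbb P_{(x,y)}$, so $\widetilde H(x_t,g_t)=H(x_t,\pi(g_t))$ is a bounded $\mathbb P_{(x,g)}$-martingale; thus $\widetilde H$ is a bounded $\mathcal L$-harmonic function on $X\times G$. Since $(x_t,g_t)_{t\geq0}$ satisfies Hypotheses \ref{hypo.devissage}--\ref{hypo.reg}, Theorem \ref{theo.gene} applies and yields a bounded measurable $\widetilde\psi$ on $S\times G$ with $\widetilde H(x,g)=\mathbb E_{(x,g)}[\widetilde\psi(\ell_\infty,g_\infty)]$ for all $(x,g)$.

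It remains to make $\widetilde\psi$ descend to $S\times Y$, which is where the $K$-right equivariance of $\mathcal L$ postulated in Hypothesis \ref{hypo.homo} enters. Repeating the martingale-problem argument of Lemma \ref{key.lemma} with the right action of $k\in K$ shows that the law of $(x_t,g_tk)_{t\geq0}$ under $\mathbb P_{(x,g)}$ coincides with the law of $(x_t,g_t)_{t\geq0}$ under $\mathbb P_{(x,gk)}$; as $\ell_\infty$ depends only on the $X$-component, this gives $\mathbb E_{(x,gk)}[\widetilde\psi(\ell_\infty,g_\infty)]=\mathbb E_{(x,g)}[\widetilde\psi(\ell_\infty,g_\infty k)]$, while on the other hand $\widetilde H$ is $K$-right invariant in $g$ since $\pi(gk)=\pi(g)$, so the left-hand side equals $\widetilde H(x,gk)=\widetilde H(x,g)$. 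Averaging against the normalized Haar measure $dk$ of the compact group $K$ and using Fubini, the function $\overline\psi(\ell,g):=\int_K\widetilde\psi(\ell,gk)\,dk$ is bounded, measurable and $K$-right invariant in $g$, hence factors as $\overline\psi(\ell,g)=\psi(\ell,\pi(g))$ for some bounded measurable $\psi$ on $S\times Y$, and it still represents $\widetilde H$, namely $\widetilde H(x,g)=\int_K\widetilde H(x,gk)\,dk=\int_K\mathbb E_{(x,g)}[\widetilde\psi(\ell_\infty,g_\infty k)]\,dk=\mathbb E_{(x,g)}[\psi(\ell_\infty,\pi(g_\infty))]$. Transporting back through Hypothesis \ref{hypo.homo} and using $y_\infty=\pi(g_\infty)$ finally gives $H(x,y)=\mathbb E_{(x,y)}[\psi(\ell_\infty,y_\infty)]$, which combined with the easy inclusion above proves the claimed identity of $\sigma$-fields.

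The step requiring the most care is this last one: one must check that the $K$-average $\overline\psi$ genuinely still represents $\widetilde H$, which rests on interchanging the $K$-average with the expectation $\mathbb E_{(x,g)}$, on the identity $\mathbb E_{(x,g)}[\widetilde\psi(\ell_\infty,g_\infty k)]=\widetilde H(x,gk)$ holding \emph{uniformly} in $k$, and on the measurability of $k\mapsto\mathbb E_{(x,g)}[\widetilde\psi(\ell_\infty,g_\infty k)]$ needed for Fubini; the compactness of $K$ (finite Haar mass and dominated convergence) makes each of these harmless, and everything else is a faithful transcription of the arguments already carried out for Theorems \ref{theo.trivial} and \ref{theo.gene}.
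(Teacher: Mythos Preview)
Your proof is correct and follows essentially the same route as the paper: lift $H$ to $\widetilde H=H\circ(\mathrm{id}\times\pi)$ on $X\times G$, apply Theorem~\ref{theo.gene}, then use $K$-right equivariance and averaging over the compact group $K$ to descend the representing function to $S\times Y$. The only cosmetic difference is that the paper makes the descent explicit by choosing a measurable section $\mathcal S:Y\to G$ and setting $H(\ell,y):=\int_K\widetilde\psi(\ell,\mathcal S(y)k)\,dk$, whereas you observe directly that $\overline\psi(\ell,g)=\int_K\widetilde\psi(\ell,gk)\,dk$ is $K$-right invariant and hence factors through $\pi$; these are equivalent (the measurable section is precisely what makes the factored map measurable).
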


\begin{proof}
 We consider here the case where $\textrm{Inv}((x_t)_{t \geq 0})$ is generated by a random variable $\ell_{\infty}$ with values in a separable measure space $(S,\mathcal G, \lambda)$. The case where $\textrm{Inv}((x_t)_{t \geq 0})$ is trivial can be treated is a very similar way. 
 Let us fix $(x,y) \in X \times Y$ and $g \in \pi^{-1}(\{ y \})$.
By Hypothesis 5, there exists a $K$-right equivariant diffusion $(x_t, g_t)_{t \geq 0}$ on $X \times G$ such that,  under $\Prob_{(x,g)}$, the process $(x_t, \pi ( g_t))_{t \geq 0}$  has the same law as the process $(x_t, y_t)_{t \geq 0}$ under $\Prob_{(x, \pi(g))}$. Moreover, $(x_t, g_t)_{t \geq 0}$ satisfies Hypotheses \ref{hypo.devissage}-\ref{hypo.reg}, in particular $g_t$ converges $\Prob_{(x,g)}-$almost surely to $g_{\infty}$ when $t$ goes to infinity. Hence, $y_t$ converges $\Prob_{(x, \pi(g))}-$almost surely to the asymptotic random variable 
\[
y_{\infty} : \omega \in \pi(\Omega)  \mapsto \left \{ \begin{array}{ll} \displaystyle{ \mathrm{lim}_{t \to + \infty} y_t (\omega ) \ \mathrm{if \ exists}, }  \\ \pi(\mathrm{Id}) \ \mathrm{else}. \end{array} \right .
\]  
Moreover, for any $g$ in $\pi^{-1}(\{ y \})$, the law of $y_\infty$ under $\Prob_{(x,y)}$ is the same as the law of $\pi(g_{\infty})$ under $\Prob_{(x,g)}$.
So let us consider $Z : \pi(\Omega) \to \R$ a bounded $\theta_t$-invariant random variable, for any $g$ in $\pi^{-1}(\{ y \})$, we have:
\[
\mathbb{E}_{(x,y)} [ Z ] = \mathbb{E}_{(x,g)} [ Z \circ \pi] = \int_K \mathbb{E}_{(x,gk)} [ Z \circ \pi] \mathrm{Haar}(dk).
\]
Since the variable $Z \circ \pi$ is $\textrm{Inv}((x_t, g_t)_{t \geq 0})-$mesurable and bounded, we obtain by Theorem \ref{theo.gene} applied to $(x_t, g_t)_{t \geq 0}$ that there exists a bounded mesurable function $(\ell, g) \mapsto \widetilde{H}(\ell, g)$ such that:
\[
\forall (x,g) \in X \times G, \;  \forall k \in K, \quad \mathbb{E}_{(x,gk)} [ Z \circ \pi] = \mathbb{E}_{(x,gk)} [ \widetilde{H}(\ell_\infty, g_\infty )].
\]
Then, using the $K$-right equivariance of $(x_t, g_t)_{t \geq 0}$ we obtain for $g \in \pi^{-1} ( \{ y \} )$:
\begin{align*}
\mathbb{E}_{(x,y)} [ Z ] &= \int_K \mathbb{E}_{(x,gk)} [ \widetilde{H}(\ell_\infty, g_\infty )] \mathrm{Haar}(dk) \\
&=  \int_K \mathbb{E}_{(x,g)} [ \widetilde{H}(\ell_\infty, g_\infty k )] \mathrm{Haar}(dk).
\end{align*}
Now introduce $\mathcal{S}: Y \to G$ a measurable section of $\pi$. Then $g_\infty =  \mathcal{S}(\pi(g_\infty) ) k'$ for some random $k' \in K$ and we have
\begin{align*}
\mathbb{E}_{(x,y)} [ Z ] &= \mathbb{E}_{(x,g)} \left [  \int_K  \widetilde{H}(\ell_\infty, \mathcal{S}(\pi(g_\infty) ) k' k ) \mathrm{Haar}(dk)    \right ] \\
&= \mathbb{E}_{(x,g)} \left [  \int_K  \widetilde{H}(\ell_\infty, \mathcal{S}(\pi(g_\infty) ) k ) \mathrm{Haar}(dk) \right ].
\end{align*}
Finally denoting by $H$ the bounded measurable function on $S \times Y$ defined by  
\[
H(\ell, y) := \int_K \widetilde{H}( \ell, \mathcal{S} (y) k ) \mathrm{Haar}(dk),
\] 
we have 
\[ 
\mathbb{E}_{(x,y)} [ Z ]  = \mathbb{E}_{(x,y)} [ H(\ell_\infty, y_\infty) ].
\]

\end{proof}

\section{Examples of application}\label{sec.appli}
In this last section, we give two examples of application of the d\'evissage method. The first one, which concerns the asymptotic behavior of the standard Brownian motion on a rotationally symmetric manifold, is a direct consequence of Theorem \ref{theo.homo}. The second one, which caracterizes the Poisson boundary of the relativistic Brownian motion in Minkowski space is an application of Theorem \ref{theo.trivial}, after a suitable change of coordinates.

\subsection{Brownian motion on rotationally invariant models}\label{sec.rot}

Let $M= \mathbb{R}^{*}_{+} \times \mathbb{S}^{n-1} \cup \{ o \}$ be a rotationally invariant model, diffeomorphic to $\mathbb{R}^{n}$, with center $o \in M$ and metric
\[g = dr^2 + f^2(r) d\theta^2.\]
The classical Laplacian $\Delta^M$ on $M$ is given by
\[ 
\Delta^M = \partial^2_r  + (n-1)\frac{f'}{f}(r)\partial_r + \frac{1}{f(r)^2}\Delta^{\mathbb{S}^{n-1}}_{\theta},
\]
where $\Delta^{\mathbb{S}^{n-1}}_{\theta}$ is the classical Laplacian on the round sphere $\mathbb{S}^{n-1}$.

Let $X$ be Brownian motion on $(M, g)$ starting from $X_0 = x_0 \neq o$ which is decomposed
according to $M\backslash \{o\}= ]0, + \infty[ \times \mathbb{S}^{n-1}$ into its radial and angular process, namely $X_t = (r_t,\theta_t)$. It solves the following system of stochastic differential equations:
\[ 
\left \lbrace \begin{array}{ll} 
\displaystyle{dr_t} &= \displaystyle{d W_t + \frac{n-1}{2} \frac{f'}{f}(r_t) dt}, \\
\displaystyle{d \theta_t} &= \displaystyle{\frac{1}{f(r_t)^2} d \Theta_t},
\end{array}
\right.
\]
where $W_t$ and $\Theta_t$ are independent Brownian motion on $\mathbb{R}$ and  $\mathbb{S}^{n-1}$ respectively. Notice that the radial component $r_t$ is a one dimensional sub-diffusion of $X_t$ and that the general theory of one-dimensional diffusion ensures that (see for example \cite{marcanton2})  
\begin{enumerate} \label{intcond}
\item if \[ \int_{1}^{+\infty} f^{1-n}(r)dr < +\infty, \] then $r_t$ goes to infinity almost surely;
\item if \[\int_{1}^{+\infty} f^{n-1}(r) \left(\int_{r}^{+\infty} f^{1-n}(\rho)d\rho\right) dr = +\infty, \] then the lifetime of $r_t$ is $+\infty$;
\item if \[  \int_{1}^{+\infty} f^{n-3}(r) \left(\int_{r}^{+\infty} f^{1-n}(\rho)d\rho\right) dr < +\infty, \] then almost surely  \[ \int_0^{+ \infty} \frac{1}{f(r_s)^2} ds < + \infty.\]
\end{enumerate}

Thus, under these integrability conditions on $f$, the Brownian motion $X_t$ in $M$ is transient, does not explode and its angular part $\theta_t$ converges almost-surely to some asymptotic random variable $\theta_\infty \in \mathbb S^{n-1}$.
In other words, $X_t$ goes to infinity in a random preferred direction or equivalently, the model $M$ being seen as the interior of the unit ball $\mathbb B$, its converges to a random point $\theta_{\infty}$ of its boundary.

\begin{figure}[ht]
\begin{center}
\includegraphics[scale=0.7]{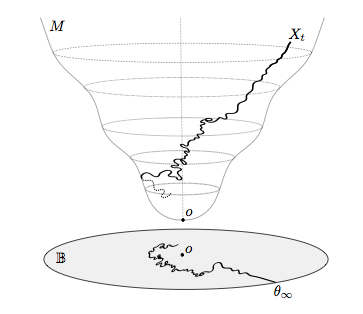}
\end{center}
\caption{Asymptotic behavior of $X_t$ under conditions 1-3.\label{fig.rot}}
\end{figure}

Thus, choosing the natural spherical coordinates on $M$, and considering the sphere $\mathbb{S}^{n-1}$ as a $SO(n)$-homogeneous space, we are in position to apply Theorem \ref{theo.homo} to obtain

\begin{theo}
Under conditions 1-3 on the warping function $f$, the Poisson boundary of the Brownian motion $X_t=(r_t, \theta_t)$ on $M$ is generated by its escape angle $\theta_\infty$. 
\end{theo}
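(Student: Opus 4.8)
The plan is to recognise the radial--angular decomposition of Brownian motion on $M$ as an instance of the homogeneous d\'evissage setting, and to apply Theorem \ref{theo.homo}. Take $X=\mathbb{R}_+^*$, $G=SO(n)$, $K=SO(n-1)$ realised as the stabiliser of a fixed base point $\theta_0\in\mathbb{S}^{n-1}$, and $Y=G/K\simeq\mathbb{S}^{n-1}$ with canonical projection $\pi$. The first step is to produce the equivariant lift demanded by Hypothesis \ref{hypo.homo}. Let $\Delta_{SO(n)}$ denote the bi-invariant Laplacian on $SO(n)$, normalised so that $\Delta_{SO(n)}(F\circ\pi)=(\Delta^{\mathbb{S}^{n-1}}F)\circ\pi$ for all $F\in C^\infty(\mathbb{S}^{n-1})$, and let $(\beta_t)_{t\geq0}$ be a Brownian motion on $SO(n)$ with generator $\tfrac12\Delta_{SO(n)}$, run independently of the one-dimensional Brownian motion $W$ driving $r_t$. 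Setting $A_t:=\int_0^t f(r_s)^{-2}\,ds$ and $g_t:=\beta_{A_t}$, the pair $(r_t,g_t)_{t\geq0}$ is a diffusion on $\mathbb{R}_+^*\times SO(n)$ whose generator is
\[
\mathcal{L}=\tfrac12\partial_r^2+\tfrac{n-1}{2}\tfrac{f'}{f}(r)\,\partial_r+\tfrac{1}{2f(r)^2}\Delta_{SO(n)},
\]
and since the projection to $\mathbb{S}^{n-1}$ turns Brownian motion on $SO(n)$ into spherical Brownian motion, $(r_t,\pi(g_t))_{t\geq0}$ has the same law as $(r_t,\theta_t)_{t\geq0}$, including the time change by $f(r_t)^{-2}$.

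I would then check Hypotheses \ref{hypo.devissage}--\ref{hypo.reg} for $(r_t,g_t)$ together with the $K$-right equivariance. The equivariance condition \ref{hypo.cov} and the $K$-right equivariance are immediate from the bi-invariance of $\Delta_{SO(n)}$: the radial part of $\mathcal{L}$ does not involve the $G$-variable and $\Delta_{SO(n)}$ commutes with both left and right translations. The regularity condition \ref{hypo.reg} holds because $\mathcal{L}$ is elliptic on $\mathbb{R}_+^*\times SO(n)$ --- it is the sum of the non-degenerate radial operator and the elliptic Casimir operator on the compact group $SO(n)$, with strictly positive coefficient $f(r)^{-2}$ --- so $\mathcal{L}$-harmonic functions are smooth, in particular continuous. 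For the convergence condition \ref{hypo.conv}, integrability condition 3 on $f$ yields $A_\infty=\int_0^\infty f(r_s)^{-2}\,ds<\infty$ almost surely, hence $g_t=\beta_{A_t}\to\beta_{A_\infty}=:g_\infty$ almost surely by path-continuity of $\beta$, and $\theta_t=\pi(g_t)\to\pi(g_\infty)=\theta_\infty$. Finally, for the d\'evissage condition \ref{hypo.devissage}, $r_t$ is visibly a sub-diffusion of $(r_t,g_t)$, and its invariant sigma field is trivial: by conditions 1 and 2, $r_t$ is a non-exploding one-dimensional diffusion with $r_t\to+\infty$ almost surely (and it never reaches $0$ when $n\geq2$), so its scale function $s$ satisfies $s(0^+)=-\infty$ and $s(+\infty)<\infty$, which forces every bounded $\mathcal{L}^X$-harmonic function to be constant; alternatively one invokes a shift-coupling argument exactly as in the proof of Lemma \ref{lem.example}.

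With Hypothesis \ref{hypo.homo} thus verified, Theorem \ref{theo.homo} applies to $(r_t,\theta_t)$ and gives
\[
\mathrm{Inv}\big((r_t,\theta_t)_{t\geq0}\big)=\mathrm{Inv}\big((r_t)_{t\geq0}\big)\vee\sigma(\theta_\infty)=\sigma(\theta_\infty)
\]
up to $\mathbb{P}_{x_0}$-negligible sets, since $\mathrm{Inv}((r_t)_{t\geq0})$ is trivial. As the Poisson boundary of the transient diffusion $X_t$ is realised by its invariant sigma field, this says precisely that the Poisson boundary of $X_t$ is generated by the escape angle $\theta_\infty$.

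I expect the main obstacle to be the construction and justification of the equivariant lift in the first step: building a diffusion on $\mathbb{R}_+^*\times SO(n)$ whose angular part is driven by a genuinely bi-invariant operator, checking that its projection to $\mathbb{R}_+^*\times\mathbb{S}^{n-1}$ reproduces the law of $(r_t,\theta_t)$ with the correct time change, and confirming that the time-changed Brownian motion on $SO(n)$ actually converges. The triviality of $\mathrm{Inv}((r_t)_{t\geq0})$, while standard for one-dimensional diffusions, should also be given a clean justification from conditions 1 and 2.
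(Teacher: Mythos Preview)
Your proposal is correct and follows essentially the same route as the paper: lift the angular part to $SO(n)$, verify Hypothesis~\ref{hypo.homo}, and apply Theorem~\ref{theo.homo} together with the triviality of $\mathrm{Inv}((r_t)_{t\geq0})$ obtained by shift-coupling. The only difference is the choice of lift---the paper uses the horizontal sub-Laplacian $\sum_{i=1}^{n-1} H_i^2$ built from the canonical left-invariant horizontal fields rather than the full bi-invariant Laplacian you use; both operators project to $\Delta^{\mathbb{S}^{n-1}}$ and satisfy the left $G$-equivariance and right $K$-equivariance needed, so either choice works (yours has the mild advantage that ellipticity makes Hypothesis~\ref{hypo.reg} immediate).
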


\begin{proof}
The devissage and regularity conditions are fulfilled and since the sub-diffusion $r_t$ goes asymptotically to infinity, we obtain by shift coupling that its invariant $\sigma$-algebra is almost-surely trivial.
It remains to check that Hypothesis 5, concerning the homogenous case, is fulfilled. Let us denote by 
\[ \pi: SO(n) \longrightarrow \mathbb{S}^{n-1}=SO(n)/SO(n-1) \]
the canonical projection which makes $SO(n)$ the orthonormal frame bundle over $\mathbb{S}^{n-1}$.  We can lift horizontally the $\mathbb{S}^{n-1}$-Brownian motion $(\Theta_t)_{t\geq 0}$ into a left invariant diffusion living in $SO(n)$. Namely, denoting by $(H_i)_{i=1\dots n-1}$ the canonical horizontal vector fields on $SO(n)$  (which are moreover left invariant) we have
\[
\forall \phi \in C^2(\mathbb{S}^{n-1}, \mathbb{R}), \quad \Delta(\phi) \circ \pi = \left ( \sum_{i=1}^{n-1} H_i^2 \right ) ( \phi \circ \pi). 
\]
Thus denoting by $(r_t, g_t)$ the diffusion on $]0, + \infty [ \times SO(n)$ generated by 
\[
\Op:= \partial^2_r  + (n-1)\frac{f'}{f}(r)\partial_r + \frac{1}{f(r)^2}  \sum_{i=1}^{n-1} H_i^2,
\]
we obtain that $(r_t, \pi(g_t))$ is a Brownian motion on $M$. Moreover, according that $\int^{+ \infty} f(r_s)^{-2} ds$ is  almost surely finite, the  process $g_t$ converges almost surely to some $SO(n)$-valued asymptotic random variable $g_{\infty}$. Note that, $(H_i)_{i=1\dots n-1}$ being left-invariant, the equivariance condition is  satisfied, and finally  Hypothesis 5 is fulfilled.
\end{proof}

\subsection{Relativistic Brownian motion in Minkowski space}\label{sec.minkowski}

Both Euclidean Brownian motion $B_t$ and  kinematic Ornstein-Uhlenbeck process are standard models for the physical Brownian motion. They are non relativistic models because, in both cases, the reference frame in which the fluid is at rest plays a specific role (taking into account the fluid viscosity). For instance the dynamics of those processes change when a constant drift is added to the frame. So, in both models, there is no Galilean covariance and a fortiori there is no Lorentzian covariance neither. Nevertheless, it is remarkable that when the viscosity coefficient of the fluid is null the kinematic Ornstein-Uhlenbeck process simply writes $(B_t, \int^t B_s ds)$ and it shows a Galilean covariant dynamics. In 1966, Dudley  introduced in \cite{dudley1} a Lorentzian analogue to this process, more precisely he proved that there exists a unique diffusion process, taking  values in the Minkowskian phase space and having a Lorentzian covariant dynamics with time-like $C^1$ trajectories. In \cite{ismael}, Bailleul caracterized the long-time asymptotic behavio of this relativistic diffusion by computing its Poisson boundary. He showed in particular that it corresponds to the causal boundary to Minkowski space-time. We propose in this section to use Theorem \ref{theo.trivial} to provide a direct proof of his result.

\if{Both Euclidean Brownian motion $B_t$ and  Ornstein-Uhlenbeck process are non relativistic models for the physical Brownian motion. They are non relativistic because The reference frame in which the fluid is at rest play a specific role (taking into account the fluid viscosity). For instance the dynamics of those processes change when a constant drift is added to the frame. So there is no Galilean covariance dynamics and a fortiori there is no Lorentzian covariance neither. Nevertheless when the viscosity coefficient of the fluid is null the Ornstein-Uhlenbeck process writes $(B_t, \int^t B_s ds)$ and has Galilean covariance dynamics. In 1966, Dudley \cite{dudley1} introduced a Lorentzian analogue to this process. It is a diffusion taking  values in the Lorentzian phase space and having Lorentzian covariance dynamics with  time-like $C^1$ trajectories in Minkowski space-time.  Bailleul \cite{ismael} found the Poisson boundary of this process and showed that it corresponds to the causal boundary to Minkowski space-time. We propose in this section to use Theorem \ref{theo.gene} to provide a direct proof of his result. 
}
\fi

\paragraph{Dudley diffusion in Minkowski space-time.} We denote by $\R^{1,d}$ the Minkowski space-time $\R^{d+1}$ endowed with the Lorentz quadratic form $q$
\[
q(\xi):= \left ( \xi^0  \right)^2 - \sum_{i=1}^d  \left ( \xi^i  \right)^2.
\]
The canonical basis is denoted by $(e_0, \dots, e_d)$.
Let denote by $\Hyp^d$ the half  pseudo unit sphere 
\[
\Hyp^d:= \{ \xi \in \R^{1,d}, \ q(\xi) =1 \ \xi^{0} > 0 \}.
\]
The restriction of $q$ to $T\Hyp^d$ makes $\Hyp^d$ a Riemmanian manifold of constant negative curvature, so $\Hyp^d$ is the hyperbolo\"id model of the hyperbolic space of dimension $d$. 

%%% à faire dans l'ordre:
% Dire que l'espace hyperbolique est rotationally invariant model.
% Introduire plutôt les coordonnées polaire pour le moment
% Definir la diffusion de Duley
% Introduire les variables asymptotiques (forme polaire)
% Enoncer le théorème d'Ismael
% Dire qu'on va le montrer en utilisant la dévissage method
% Introduire les coordonnées d'Iwasawa dans H et le changement de cordonnées dans H \times R^1,d 
% Ecrire le system de SDE dans les nouvelles coordonnées
% Expliquer la démarche en gros et énoncer le théorème avec les variables h et delta
% Faire le lien avec les variables asymptotic précédemment introduites en forme polaire
% Faire la preuve en plusieurs étapes. 
%
Via the following polar coordinates
\[
\begin{matrix}
\mathbb{R}^{*}_{+} \times \mathbb{S}^{d-1} & \longrightarrow &  \mathbb{H}^{d} \setminus \{ e_0 \} \\
(r, \theta) & \longmapsto & \displaystyle{\cosh(r)e_0 + \sinh(r) \sum_{i=1}^{d} \theta^i e_i}
\end{matrix}
\]
$\mathbb{H}^d$ is a rotationally invariant model centered at $e_0$ with metric
$
(ds)^2= (dr)^2 + \sinh(r) (d \theta )^2, 
$
and the Laplacian is given by
\[
\Delta_{r, \theta}^{\mathbb{H}^d}= \partial^2_r + (d-1) \coth(r) \partial_r + \frac{1}{\sinh(r)^2} \Delta^{\mathbb{S}^{d-1}}_\theta. 
\]
Let now define Dudley's diffusion introduced in \cite{dudley1}.
\begin{defi}
Dudley's diffusion is the diffusion process $(\dot{\xi}_t, \xi_t)$ with values in $\Hyp^d \times \R^{1,d}$ generated by 
\[
\Op:= \frac{\sigma^2}{2} \Delta^{\Hyp^d}_{\dot{\xi}} + \dot{\xi} \cdot \partial_{\xi},
\]
thus $\dot{\xi}_t$ is a classical Riemannian Brownian motion in $\Hyp^d$ and $\xi_t= \xi_0 + \int_0^t \dot{\xi}_s ds$.
\end{defi}

Note that  the paths $\xi_t$ are $C^1$ and time-like (since $q(\dot{\xi}_t )=1$). Moreover since Lorentz linear transforms act by isometry on $\Hyp^d$ and the $\Hyp^d$-Brownian motion $\dot{\xi}_t$ has isometries equivariant dynamics, it follows that Dudley's process has Lorentz equivariant dynamics. 

\paragraph{Asymptotic random variables.} Since the warping function $f:=\sinh$ satisfies the integrability condition 1-3 of Sect. \ref{intcond}, the angular process $\theta_t$ of $\dot{\xi}_t$ converges almost surely to a random variable $\theta_\infty \in \mathbb S^{d-1}$. There is another asymptotic random variable associated to $\xi_t$. We have indeed that $q( \xi_t, e_0 + \theta_\infty )$ converges almost-surely to some real random variable $R_\infty$. Geometrically this asymptotic random variable $R_\infty$ defines the position of some asymptotic affine hyperplan, whose direction is $q$-orthogonal to $e_0 + \theta_\infty$, see Fig. \ref{fig.hyperplan} below.

\begin{figure}[ht]
\begin{center}
\includegraphics[scale=0.7]{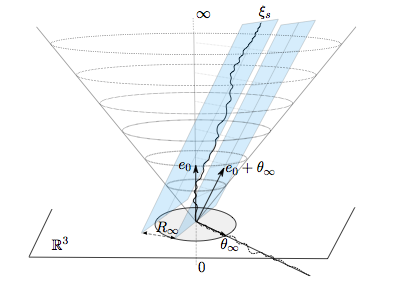}
\end{center}
\caption{Typical behavior of the relativistic diffusion in Minkowski space.\label{fig.hyperplan}}
\end{figure}
\noindent
We refer to \cite{ismael} for a detailed proof. Briefly, it follows from the decomposition
\begin{align*}
q( \xi_t, e_0 + \theta_\infty ) &= q( \xi_0, e_0 + \theta_\infty) + \int_0^t \left ( \cosh(r_s) - \sinh(r_s)\langle \theta_s, \theta_\infty \rangle \right )ds \\
&= q( \xi_0, e_0 + \theta_\infty) + \int_0^t \left ( e^{-r_s} - \sinh(r_s) d(\theta_s, \theta_\infty)^2  \right )ds,
\end{align*}
where $d(\cdot, \cdot)$ is the Riemannian distance on $\mathbb{S}^{d-1}$ and from the fact 
\[
\frac{r_t}{t} \underset{t \to \infty}{\longrightarrow} \frac{d-1}{2}\sigma^2, \qquad 
\limsup_{t \to + \infty} \log (d(\theta_t, \theta_\infty) ) \leq -\frac{d-1}{2}\sigma^2.
\]

\if{
We refer to \cite{ismael} for a detailed proof. Briefly, it follows from the decomposition
\begin{align*}
q( \xi_t, e_0 + \theta_\infty ) &= q( \xi_0, e_0 + \theta_\infty) + \int_0^t \left ( \cosh(r_s) - \sinh(r_s)\langle \theta_s, \theta_\infty \rangle \right )ds \\
&= q( \xi_0, e_0 + \theta_\infty) + \int_0^t \left ( e^{-r_s} - \sinh(r_s) d(\theta_s, \theta_\infty)^2  \right )ds,
\end{align*}
\begin{align*}
q( \xi_t, e_0 + \theta_\infty )&= q( \xi_0, e_0 + \theta_\infty) + \int_0^t \left ( e^{-r_s} - \sinh(r_s) d(\theta_s, \theta_\infty)^2  \right )ds,
\end{align*}
and from the fact that the last integral is convergent according to
\[
\frac{r_t}{t} \underset{t \to \infty}{\longrightarrow} \frac{d-1}{2}\sigma^2, \qquad 
\limsup_{t \to + \infty} \log (d(\theta_t, \theta_\infty) ) \leq -\frac{d-1}{2}\sigma^2,
\]
where $d(\cdot, \cdot)$ is the Riemannian distance on $\mathbb{S}^{d-1}$.
}\fi

In \cite{ismael}, Bailleul proved that the two variables $\theta_\infty$ and  $R_\infty$ are the only asymptotic variables associated to Dudley's diffusion. Namely, using coupling techniques, almost-coupling techniques, uniform continuity estimates for harmonic functions obtained via delicate Harnack inequalities,  he proved that
\begin{theo}[Bailleul]
The invariant $\sigma$-field of Dudley's diffusion $(\dot{\xi}_t, \xi_t)_{t \geq 0}$ is generated by the couple $(\theta_\infty, R_\infty) \in \mathbb S^{d-1}\times \mathbb R_+^*$.
\end{theo}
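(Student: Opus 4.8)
The plan is to change coordinates so that Dudley's diffusion fits the product structure $X\times G$ required by Theorem~\ref{theo.trivial}, and then simply apply that theorem. The natural choice is to let $G$ be the group of Lorentz transformations that, in the hyperboloid picture, stabilise the asymptotic direction $e_0+\theta_\infty$ in a suitable sense — essentially the $2d$-dimensional group generated by the parabolic subgroup fixing a boundary point of $\Hyp^d$ together with the spatial translations of $\R^{1,d}$ in the directions transverse to $e_0+\theta_\infty$ — and let $X$ collect the remaining, ``radial and transverse-decaying'' coordinates whose dynamics form an autonomous sub-diffusion. Concretely, one picks new fibre coordinates on $\Hyp^d\times\R^{1,d}$ of the form $(\ell_t, h_t)$, where $\ell_t$ records the escaping-geodesic parameters that converge to triviality (the quantities governing $d(\theta_t,\theta_\infty)$ and $r_t - \frac{d-1}{2}\sigma^2 t$), and $h_t$ records the ``horocyclic'' position together with the quantity $q(\xi_t, e_0+\cdot)$, so that $h_t$ converges a.s.\ to a limit $h_\infty$ encoding exactly the pair $(\theta_\infty, R_\infty)$.

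First I would set up this change of variables precisely: using the Iwasawa-type decomposition of the isometry group of $\Hyp^d$ attached to a boundary point, write $\dot\xi_t$ as a product in which the ``$N$-part'' and the position coordinate $\xi_t$ together carry a right-invariant group structure, while the ``$A$-part'' and ``$K$-part'' (angular motion towards $\theta_\infty$) form the base $X$. Second, I would verify Hypotheses~1--4. The d\'evissage condition (Hyp.~\ref{hypo.devissage}) requires that the base process $(x_t)$ be an autonomous sub-diffusion with trivial invariant $\sigma$-field; triviality follows by shift-coupling exactly as in Sect.~\ref{sec.rot}, since the base process is driven by $(r_t,\theta_t)$ with $r_t\to\infty$ and $\theta_t\to\theta_\infty$, and one uses the integrability conditions 1--3 on $f=\sinh$ recalled above. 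The convergence condition (Hyp.~\ref{hypo.conv}) is the a.s.\ convergence of $h_t$, which amounts to the convergence of $\theta_t$ (already known) together with the convergence of $q(\xi_t,e_0+\theta_\infty)$ to $R_\infty$, itself established via the displayed integral decomposition and the estimates $r_t/t\to\frac{d-1}{2}\sigma^2$, $\limsup\log d(\theta_t,\theta_\infty)\le -\frac{d-1}{2}\sigma^2$. The equivariance condition (Hyp.~\ref{hypo.cov}) is where the Lorentz-equivariance of Dudley's dynamics enters: since $\mathcal L$ is invariant under the full Lorentz group acting by isometries on $\Hyp^d$ and linearly on $\R^{1,d}$, in particular it is equivariant under the left action of the chosen subgroup $G$ on the fibre. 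The regularity condition (Hyp.~\ref{hypo.reg}) holds because $\mathcal L$ is (hypo)elliptic: the $\Hyp^d$-Laplacian is elliptic in the velocity variable and the bracket with the drift $\dot\xi\cdot\partial_\xi$ generates the missing directions, so H\"ormander's condition is satisfied and $\mathcal L$-harmonic functions are smooth.

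With Hypotheses~1--4 in place and $\mathrm{Inv}((x_t))$ trivial, Theorem~\ref{theo.trivial} gives that $\mathrm{Inv}((\dot\xi_t,\xi_t)_{t\ge0})$ coincides, up to negligible sets, with $\sigma(h_\infty)=\sigma(\theta_\infty,R_\infty)$. It remains only to check that the new coordinates are a genuine change of variables (a measurable bijection, compatible with the filtrations) so that the invariant $\sigma$-field is unchanged, and that $h_\infty$ and $(\theta_\infty,R_\infty)$ generate the same $\sigma$-field; both are routine given the explicit formulas.

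I expect the main obstacle to be the bookkeeping of the change of coordinates: one must exhibit a concrete splitting $\Hyp^d\times\R^{1,d}\cong X\times G$ in which (i) the base component is truly autonomous — no feedback from the fibre into the $X$-SDE — and (ii) the left $G$-action corresponds exactly to a subgroup of Lorentz$\ \ltimes\ \R^{1,d}$ under which $\mathcal L$ is equivariant. Choosing $G$ too small breaks equivariance; choosing it too large destroys the autonomy of the base or the a.s.\ convergence of the fibre. The correct choice is dictated by the horospherical coordinates centred at the (random, but here we may fix it after reduction) boundary point, and making this rigorous — in particular handling the dependence on $\theta_\infty$, which is itself asymptotic — is the delicate point; one circumvents it by working with a fixed reference boundary point and invoking Lorentz-equivariance to transport the conclusion, exactly as the homogeneous-space argument of Theorem~\ref{theo.homo} does for the angular part.
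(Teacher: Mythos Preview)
Your overall strategy---pass to Iwasawa-type coordinates attached to a fixed boundary point and apply Theorem~\ref{theo.trivial}---is exactly what the paper does, and your remarks on Hypotheses~\ref{hypo.cov} and~\ref{hypo.reg} are correct. But the splitting you sketch is not the right one, and the discrepancy is substantive rather than bookkeeping. You propose a base $X$ carrying only velocity-type data (``the quantities governing $d(\theta_t,\theta_\infty)$ and $r_t-\tfrac{d-1}{2}\sigma^2 t$'') and a $2d$-dimensional group $G$ containing the spatial translations transverse to $e_0+\theta_\infty$. This violates Hypothesis~\ref{hypo.conv}: most components of $\xi_t$ diverge, so loading the bulk of the position into $G$ kills almost-sure convergence. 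The paper's decomposition goes the other way: the base $X=\R\times\R\times\R^{d-1}$ consists of the Iwasawa $A$-coordinate $\alpha$ \emph{together with} two position-derived quantities $\beta:=(\mathcal T_h^{-1}\xi)^+$ and $\gamma:=(\mathcal T_h^{-1}\xi)^\perp$, while only the $d$ remaining coordinates $(h,\delta)\in\R^{d-1}\times\R$---the horocyclic coordinate of $\dot\xi$ and the single component $(\mathcal T_h^{-1}\xi)^-$---form the abelian group $G$. The SDEs for $dh_t$ and $d\delta_t$ involve $\alpha_t,\beta_t,\gamma_t$ but not $h_t,\delta_t$ themselves (this is the equivariance), whereas $(\alpha_t,\beta_t,\gamma_t)$ is autonomous; one cannot obtain an autonomous base without absorbing $\beta$ and $\gamma$, since $d\delta_t$ depends on both.

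A second gap: the triviality of $\mathrm{Inv}((\alpha_t,\beta_t,\gamma_t))$ does not follow ``exactly as in Sect.~\ref{sec.rot}''. The base here is a $(d{+}1)$-dimensional process in which $\alpha_t,\beta_t\to\infty$ while $\gamma_t$ is a time-changed $\R^{d-1}$-Brownian motion. The paper constructs an explicit three-stage shift-coupling (couple $\alpha$ by independent copies until meeting; synchronise $\beta$ by a deterministic time-shift using monotonicity; then couple $\gamma$ by mirror reflection), and must in particular prove that $\int_0^\infty \beta_s^2 e^{-2\alpha_s}\,ds=+\infty$ almost surely so that the $\gamma$-clock is unbounded and the reflection coupling succeeds. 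None of this is contained in the one-dimensional radial argument of Sect.~\ref{sec.rot}.
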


We propose here to use the devissage method to recover this result. For that we write the dynamics of Dudley diffusion in a new coordinate system $(\alpha_t, \beta_t, \gamma_t, h_t , \delta_t)$ which make appear a decomposition of the original diffusion into a sub-diffusion $(\alpha_t, \beta_t, \gamma_t)$ and a process $(h_t , \delta_t)$ with values in the group  $\R^{d-1} \times \R$ and which has equivariant dynamics. Then, we show that this $ \R^{d-1} \times \R$-valued process converges almost surely  to some asymptotic random variable $(h_\infty, \delta_\infty)$. We conclude the proof by checking that the sub-diffusion $(\alpha_t, \beta_t, \gamma_t)$ has a trivial Poisson boundary. In Remark \ref{rem.link} below, we explicit the link between $(h_\infty, \delta_\infty)$ and $(\theta_\infty, R_\infty)$, namely $h_\infty$ is a stereographical projection of $\theta_\infty$ and $\delta_\infty$ is a proportional to $R_\infty$. Our approach is inspired by Bailleul and Raugi's work \cite{Albert-Ismael} where the authors use Raugi's results on random walks on Lie groups to find the Poisson boundary of Dudley diffusion. 

\paragraph{New system of coordinates in $\Hyp^d \times \R^{1,d}$.} We find new coordinates in which Dudley's diffusion splits up in a sub-diffusion and a  process with values in the group $\R^{d-1} \times \R$. 
First, let us introduce Iwasawa coordinates on $\Hyp^{d}$ (see for instance \cite{fljlivre}) 
\[
\begin{matrix}
\R \times \R^{d-1} & \longrightarrow &  \Hyp^d \\
(\alpha, h )& \longmapsto & \left ( \begin{matrix} \frac{e^{\alpha}}{2} (1+ \vert h \vert^2 ) + \frac{e^{-\alpha}}{2}\\  \frac{e^{\alpha}}{2} (1-\vert h \vert^2 ) - \frac{e^{-\alpha}}{2} \\ e^{\alpha}h \end{matrix}  \right ) = \mathcal{T}_h \mathcal{D}_{\alpha}(e_0)
\end{matrix}
\] 
where $\mathcal{T}_{h}$ and $\mathcal{D}_{\alpha}$ are the following matrix of $q$-isometries of $\R{1,d}$.
\[
\mathcal{T}_{h} = \exp \left ( \begin{matrix} 0 & 0 & h^{t} \\ 0 &0 &-h^{t}  \\ h &h & 0   \end{matrix}  \right ) = \left ( \begin{matrix} 1+ \frac{\vert h \vert^2}{2} & \frac{\vert h \vert^2}{2} &h^{t} \\ -\frac{\vert h \vert^2}{2} & 1-\frac{\vert h \vert^2}{2} &-h^t \\ h &h& \mathrm{Id} \end{matrix}\right )
\]
and 
\[
\mathcal{D}_{\alpha}=\exp \left ( \begin{matrix} 0& \alpha & 0 \\ \alpha & 0 & 0  \\ 0& 0& 0   \end{matrix}  \right )= \left ( \begin{matrix} \cosh(\alpha)  & \sinh(\alpha) & 0 \\ \sinh(\alpha) &  \cosh(\alpha)  & 0 \\ 0 &0 & \mathrm{Id} \end{matrix} \right).
\]

Note that $\mathcal{T}_{h+h'}= \mathcal{T}_{h}\mathcal{T}_{h'}$ and $\mathcal{D}_{\alpha+ \alpha'}=\mathcal{D}_{\alpha}\mathcal{D}_{\alpha'}$ and that $(y:= e^{-\alpha}, h)$ are the classical half-space coordinates of hyperbolic space.
For $\xi \in \R^{1,d}$ denote by $(\xi)^{+}:= q( \xi, e_0 -e_1)$, $(\xi)^{-}:=q( \xi, e_0 + e_1)$ and $(\xi)^{\perp}:= (q( \xi, e_i ) )_{i=2,\dots,d} \in \R^{d-1} $.  We have the following $q$-orthogonal decomposition
\[\xi= (\xi)^{+} \frac{e_0 +e_1}{2} +(\xi)^{-}\frac{e_0 -e_1}{2}+\sum_{i=2}^d(\xi)_i^{\perp} e_i. \]
Now consider the new system of coordinates in $\Hyp^{d} \times \R^{1,d}$ given by the following diffeomorphism
\[
\begin{matrix}
\Hyp^{d} \times \R^{1,d}   & \longrightarrow & \R \times \R \times \R^{d-1} \times  \R^{d-1} \times \R \\
 (\mathcal{T}_{h}\mathcal{D}_{\alpha}(e_0) , \xi ) & \longmapsto & (\alpha, \beta, \gamma , h , \delta) := (\alpha, (\mathcal{T}_{h}^{-1} \xi )^{+}, (\mathcal{T}_{h}^{-1} \xi )^{\perp}, h, (\mathcal{T}_{h}^{-1} \xi )^{-} )  \end{matrix}.
\]

\begin{lemma}
In this new system of coordinates, Dudley's diffusion $(\alpha_t, \beta_t, \gamma_t, h_t , \delta_t)$ satisfies the following system of stochastic differential equations:
\begin{align*}
d \alpha_t &= \sigma d W_t + \frac{1}{2}\sigma^2(d-1) dt \\
d \beta_t &= e^{\alpha_t} dt \\
d \gamma_t &= \sigma e^{-\alpha_t}\beta_t dB_t \\
dh_t &= \sigma e^{-\alpha_t} dB_t \\
d \delta_t &= \left (e^{-\alpha_t} + \sigma^2 (d-1)\beta_t  e^{-2 \alpha_t} \right )dt + 2 \sigma e^{-\alpha_t} \gamma_t \cdot dB_t
\end{align*}
where $(W_t, B_t)$ is a standard Brownian motion of $\R \times \R^{d-1}$. 
\end{lemma}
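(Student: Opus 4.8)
The plan is to treat separately the velocity process $\dot{\xi}_t$, which is the Riemannian Brownian motion on $\Hyp^d$ run at speed $\sigma$, and the position process $\xi_t=\xi_0+\int_0^t\dot\xi_s\,ds$. For the velocity process, I would use that the map $(\alpha,h)\mapsto\mathcal{T}_h\mathcal{D}_\alpha(e_0)$ is exactly the parametrisation of $\Hyp^d$ by the upper half-space coordinates $(y,h)=(e^{-\alpha},h)$, in which the Laplace--Beltrami operator reads $\Delta=y^2(\partial_y^2+\Delta_h)-(d-2)\,y\,\partial_y$; performing the change of variable $y=e^{-\alpha}$ gives
\[
\Delta^{\Hyp^d}=\partial_\alpha^2+(d-1)\,\partial_\alpha+e^{-2\alpha}\,\Delta_h ,
\]
where $\Delta_h$ is the flat Laplacian on $\R^{d-1}$. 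Reading off the drift and diffusion coefficients of the generator $\tfrac{\sigma^2}{2}\Delta^{\Hyp^d}$ of $\dot\xi_t$ yields at once the first and fourth equations of the lemma, for some standard Brownian motion $(W_t,B_t)$ of $\R\times\R^{d-1}$ (the two blocks being independent because $\Delta^{\Hyp^d}$ has no $\partial_\alpha\partial_{h}$ cross-term); in particular $d\langle h^i,h^j\rangle_t=\sigma^2e^{-2\alpha_t}\delta_{ij}\,dt$.

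Next I would turn to the position process. Introduce $\eta_t:=\mathcal{T}_{h_t}^{-1}\xi_t=\mathcal{T}_{-h_t}\xi_t$, so that by the very definition of the new coordinates $\beta_t=q(\eta_t,e_0-e_1)$, $\gamma_t=(q(\eta_t,e_i))_{2\le i\le d}$ and $\delta_t=q(\eta_t,e_0+e_1)$; the equations for $\beta_t,\gamma_t,\delta_t$ will come from computing $d\eta_t$ by It\^o's product rule applied to the matrix-valued semimartingale $\mathcal{T}_{-h_t}$ and the $\R^{1,d}$-valued semimartingale $\xi_t$. Two facts make this tractable. First, $\xi_t$ is $C^1$, hence of finite variation, so the covariation term in $d(\mathcal{T}_{-h_t}\xi_t)$ vanishes and $\mathcal{T}_{-h_t}\,d\xi_t=\mathcal{T}_{-h_t}\dot\xi_t\,dt=\mathcal{D}_{\alpha_t}(e_0)\,dt=\bigl(e^{\alpha_t}\tfrac{e_0+e_1}{2}+e^{-\alpha_t}\tfrac{e_0-e_1}{2}\bigr)dt$. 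Second, writing $A(h)=\sum_i h_iA_i$ for the matrix (linear in $h$) such that $\mathcal{T}_h=\exp A(h)$, one checks $A(h)^3=0$ and $A(h)^2=|h|^2E$ for the fixed matrix $E$ that sends $e_0+e_1$ to $2(e_0-e_1)$ and annihilates $e_0-e_1$ and $e_2,\dots,e_d$; hence $\mathcal{T}_{-h}=\mathrm{Id}-A(h)+\tfrac12|h|^2E$ is a degree-two polynomial in $h$, and It\^o's formula gives
\[
d(\mathcal{T}_{-h_t})=-A(dh_t)+(h_t\cdot dh_t)\,E+\tfrac{d-1}{2}\,\sigma^2e^{-2\alpha_t}\,E\,dt ,
\]
the last term being the genuine It\^o correction produced by the quadratic dependence on $h$.

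It then remains to pair $d\eta_t=(d\mathcal{T}_{-h_t})\xi_t+\mathcal{T}_{-h_t}\,d\xi_t$ successively against $e_0-e_1$, the $e_i$'s and $e_0+e_1$, using the elementary identities: $\mathcal{T}_h$ fixes $e_0-e_1$, so $q(\xi_t,e_0-e_1)=\beta_t$ and $q(\xi_t,e_{i+1})=\gamma_t^i-h_t^i\beta_t$; each generator $A_i$ is $q$-antisymmetric and maps $e_0-e_1$ to $0$, $e_0+e_1$ to $2e_{i+1}$ and $e_{i+1}$ to $e_0-e_1$; and $E$ is $q$-symmetric with $E(e_0+e_1)=2(e_0-e_1)$ and $E(e_0-e_1)=E(e_i)=0$ for $i\ge2$. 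Pairing against $e_0-e_1$ gives $d\beta_t=e^{\alpha_t}dt$; pairing against $e_{i+1}$ for each $i$, only the $-A(dh_t)\xi_t$ term survives and contributes the $i$-th component $\sigma e^{-\alpha_t}\beta_t\,dB_t^i$, i.e. $d\gamma_t=\sigma e^{-\alpha_t}\beta_t\,dB_t$; pairing against $e_0+e_1$, the term $\mathcal{T}_{-h_t}d\xi_t$ contributes $e^{-\alpha_t}dt$, the It\^o correction contributes $\sigma^2(d-1)\beta_te^{-2\alpha_t}dt$, and the remaining martingale terms of $(d\mathcal{T}_{-h_t})\xi_t$ recombine---after the $h_t$-dependent pieces cancel---into $2\sigma e^{-\alpha_t}\gamma_t\cdot dB_t$, which assembles into the stated equation for $d\delta_t$.

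The computation is essentially bookkeeping, and the main obstacle is simply to carry it out without error: one must not overlook the second-order It\^o term in $d(\mathcal{T}_{-h_t})$, which is the sole source of the $\sigma^2(d-1)\beta_te^{-2\alpha_t}$ drift in $d\delta_t$ and would be invisible in a first-order manipulation, and one must check that all the $h_t$-linear drift and noise contributions do cancel, so that the equations for $\beta_t$ and $\gamma_t$ indeed come out free of $h_t$.
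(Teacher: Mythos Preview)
Your proposal is correct and follows the same overall strategy as the paper: read off the $(\alpha_t,h_t)$ equations from the Iwasawa form of $\Delta^{\Hyp^d}$, then obtain $d\beta_t,d\gamma_t,d\delta_t$ by applying a stochastic product rule to $\mathcal{T}_{h_t}^{-1}\xi_t$ and pairing against $e_0-e_1$, $e_{i}$, $e_0+e_1$. The only difference is organisational: the paper uses the adjoint identity $q(\mathcal{T}_{h_t}^{-1}\xi_t,v)=q(\xi_t,\mathcal{T}_{h_t}v)$ together with Stratonovich calculus (so the second-order term never appears explicitly), whereas you expand $\mathcal{T}_{-h_t}=\mathrm{Id}-A(h_t)+\tfrac12|h_t|^2E$ and track the It\^o correction by hand---your identification of that correction as the sole source of the $\sigma^2(d-1)\beta_t e^{-2\alpha_t}$ drift, and of the cancellation of the $h_t$-linear pieces in $d\delta_t$, is exactly right.
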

\begin{proof}
Since in Iwasawa coordinates $(\alpha, h)$ the Laplacian in $\Hyp^d$ writes (see \cite{fljlivre} ) 
\begin{align} \label{Lap}
\Delta^{\Hyp} = e^{-2\alpha} \Delta^{\R^{d-1}}_{h} + \frac{\partial^2}{\partial \alpha^2} + (d-1) \frac{\partial}{\partial \alpha},
\end{align}
there exist $W_t$ and $B_t$ usual Brownian motion respectively of $\R$  and $\R^{d-1}$ such that
\begin{align*}
d \alpha_t &= \sigma dW_t + \frac{\sigma^2}{2}(d-1) dt, \\
d h_t &= \sigma e^{-\alpha_t} dB_t.
\end{align*}
Moreover, recall by definition $\dot{\xi}_t :=  \mathcal{T}_{h_t} \mathcal{D}_{\alpha_t}(e_0)$ thus
\begin{align*}
d \beta_t:= d (\mathcal{T}_{h_t}^{-1} \xi_t )^{+}&= d q( \mathcal{T}_{h_t}^{-1} \xi_t,e_0 -e_1) \\
&= d q( \xi_t, e_0 -e_1) \quad \mathrm{since} \ \mathcal{T}_{h_t}(e_0 -e_1)= e_0 -e_1 \\
&= q( \dot{\xi}_t, e_0-e_1) dt = e^{\alpha_t}dt.
\end{align*}
Moreover for $i=2\dots d$
\begin{align*} 
d(\gamma_t)^i := d q(\mathcal{T}_{h_t}^{-1}\xi_t, e_i )&= d q(\xi_t, \mathcal{T}_{h_t}(e_i) ) \\
&= q(\dot{\xi}_t,  \mathcal{T}_{h_t}(e_i) ) + q( \xi_t , \circ d\mathcal{T}_{h_t}(e_i) ) \\
&= q(\mathcal{D}_{\alpha_t},e_i) + q( \mathcal{T}_{h_t}^{-1} \xi_t, \mathcal{T}_{h_t}^{-1}\circ d\mathcal{T}_{h_t}(e_i) ) \\
&= 0 + q( \mathcal{T}_{h_t}^{-1} \xi_t, \circ dh_t^i (e_0 -e_1) ) \\
&= \beta_t \circ dh_t^i = \beta_t e^{-\alpha_t}dB_t^i.
\end{align*}
Finally, we have
\begin{align*}
d \delta_t &= d q( \mathcal{T}_{h_t}^{-1} \xi_t,e_0 + e_1) \\
&=d q( \xi_t, \mathcal{T}_{h_t}(e_0 +e_1) ) \\
&= q( \dot{\xi}_t, \mathcal{T}_{h_t}(e_0 +e_1) ) + q(\mathcal{T}_{h_t}^{-1} \xi_t, \mathcal{T}_{h_t}^{-1}\circ d\mathcal{T}_{h_t}(e_0 +e_1) ) \\
&= e^{-\alpha_t}dt + q( \mathcal{T}_{h_t}^{-1} \xi_t , 2\sum_{i=2}^d \circ d h_t^i e_i ) \\
&= e^{-\alpha_t}dt + 2\sum_{i=2}^d \gamma_t^i \circ dh_t^i \\
&= e^{-\alpha_t}dt +2\sigma e^{-\alpha_t}\gamma_t\cdot dB_t  + \sigma^2(d-1) \beta_t e^{-2 \alpha_t}dt.
\end{align*}
\end{proof}
\begin{remark}
We immediately see that $(\alpha_t, \beta_t, \gamma_t)_{t \geq 0}$ is a (d+1)-dimensional sub-diffusion of Dudley diffusion, and  the $\R^{d-1} \times \R$-valued process $(h_t, \delta_t)_{t \geq 0}$ have translation equivariant dynamics.
\end{remark}
\paragraph{Asymptotic behavior in the new coordinates.} \par
 We now establish the asymptotic behavior of Dudley's diffusion in coordinates $(\alpha, \beta, \gamma, h , \delta)$.  Namely, we show that $h_t$ and $\delta_t$ converge almost-surely to some asymptotic random variables $h_\infty \in \R^{d-1}$ and $\delta_\infty \in \mathbb R$. Moreover $\alpha_t$ and $\beta_t$ are transient and go almost-surely to infinity with $t$, and $\gamma_t$ is recurrent in $\mathbb R^{d-1}$. Note that  the asymptotic random variables  $(h_\infty, \delta_{\infty})$ are related to $(\theta_\infty,  R_\infty)$ in simple way which is explained in Remark \ref{rem.link} below. 
\begin{prop}
We have almost-surely 
\begin{align}
\frac{\alpha_t}{t} &\underset{t \to + \infty}{\longrightarrow} \frac{1}{2}\sigma^2(d-1) \label{alpha} \\
h_t &\underset{t \to + \infty}{\longrightarrow} h_\infty \\
\delta_t &\underset{t \to + \infty}{\longrightarrow} \delta_\infty 
\end{align}
where $h_\infty$ and $\delta_\infty$ are two asymptotic random variables. Moreover, we have almost-surely
\begin{align}
\int_0^{+\infty} \beta^2_u e^{-2 \alpha_u} du =+ \infty. \label{infini}
\end{align}
\end{prop}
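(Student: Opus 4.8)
The plan is to analyze the four claims in turn, starting from the explicit SDEs for $(\alpha_t,\beta_t,\gamma_t,h_t,\delta_t)$, and to exploit heavily the fact that $\alpha_t$ is an autonomous Brownian motion with constant positive drift. First, since $d\alpha_t = \sigma\,dW_t + \tfrac12\sigma^2(d-1)\,dt$, we have $\alpha_t = \alpha_0 + \sigma W_t + \tfrac12\sigma^2(d-1)t$, so \eqref{alpha} is an immediate consequence of the strong law of large numbers for Brownian motion: $\alpha_t/t \to \tfrac12\sigma^2(d-1)$ almost surely. By the law of iterated logarithm we moreover get the quantitative control $\alpha_t \geq \tfrac14\sigma^2(d-1)t$ for all $t$ large enough, almost surely, and this estimate is what feeds all the remaining convergence statements.

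Next I would prove the convergence of $h_t$. We have $h_t = h_0 + \sigma\int_0^t e^{-\alpha_u}\,dB_u$, a continuous local martingale in $\R^{d-1}$ with bracket $\langle h\rangle_t = \sigma^2\int_0^t e^{-2\alpha_u}\,du$. Using $\alpha_u \geq \tfrac14\sigma^2(d-1)u$ for $u$ large, the integrand $e^{-2\alpha_u}$ is almost surely dominated by an exponentially decaying function, hence $\langle h\rangle_\infty < +\infty$ almost surely; by the martingale convergence theorem $h_t$ converges almost surely to some $h_\infty\in\R^{d-1}$. The same estimate shows that $\beta_t = \beta_0 + \int_0^t e^{\alpha_u}\,du$ is an increasing process that tends to $+\infty$ almost surely (so $\beta$ is transient), and in fact $\beta_t \sim e^{\alpha_t}/(\tfrac12\sigma^2(d-1))$ to leading exponential order, so $\log\beta_t \sim \alpha_t$, i.e. $\beta_t$ grows like $e^{ct}$ with $c = \tfrac12\sigma^2(d-1)$. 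For the convergence of $\delta_t$, recall
\[
\delta_t = \delta_0 + \int_0^t\!\left(e^{-\alpha_u} + \sigma^2(d-1)\beta_u e^{-2\alpha_u}\right)du + 2\sigma\int_0^t e^{-\alpha_u}\gamma_u\cdot dB_u .
\]
The first integrand is $O(e^{-\alpha_u})$, hence integrable; the second integrand, using $\beta_u e^{-\alpha_u}$ bounded (since $\beta_u e^{-\alpha_u}\to (\tfrac12\sigma^2(d-1))^{-1}$), is also $O(e^{-\alpha_u})$, hence integrable; so the finite-variation part converges. For the martingale part $N_t := 2\sigma\int_0^t e^{-\alpha_u}\gamma_u\cdot dB_u$ one needs $\langle N\rangle_\infty = 4\sigma^2\int_0^t e^{-2\alpha_u}|\gamma_u|^2\,du < \infty$; this requires a polynomial (in fact, by the SDE for $\gamma$, at most polynomial-in-$t$) bound on $|\gamma_u|^2$, which when multiplied by the exponentially small $e^{-2\alpha_u}$ yields an almost surely finite integral, hence $N_t$ converges and so does $\delta_t$ to some $\delta_\infty\in\R$. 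The recurrence of $\gamma_t$ in $\R^{d-1}$, while asserted in the surrounding text, is not needed for the four displayed claims and I would only invoke a crude a.s. bound $|\gamma_t| = o(e^{\varepsilon t})$ for every $\varepsilon>0$, which follows from $\langle\gamma\rangle_t = \sigma^2\int_0^t e^{-2\alpha_u}\beta_u^2\,du$ being comparable to $t$ (using $\beta_u e^{-\alpha_u}$ bounded) together with the martingale law of iterated logarithm.

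Finally, for \eqref{infini}, the point is precisely that $\beta_u e^{-\alpha_u}$ does \emph{not} decay: as noted, $\beta_u e^{-\alpha_u} \to (\tfrac12\sigma^2(d-1))^{-1} > 0$ almost surely, so $\beta_u^2 e^{-2\alpha_u}$ converges to a strictly positive constant and therefore $\int_0^{+\infty}\beta_u^2 e^{-2\alpha_u}\,du = +\infty$ almost surely. Concretely, $\int_0^t e^{\alpha_u}du \sim c^{-1}e^{\alpha_t}$ with $c = \tfrac12\sigma^2(d-1)$ by a Laplace-type / l'Hôpital argument using $\alpha_t\to+\infty$, which gives $\beta_t e^{-\alpha_t}\to c^{-1}$, and then the divergence of the integral of a function tending to $c^{-2}>0$ is immediate.

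The main obstacle is the treatment of the martingale term $N_t$ in the equation for $\delta_t$: it requires controlling the growth of $|\gamma_u|$, and $\gamma$ is itself driven by $\beta_u e^{-\alpha_u}dB_u$, so one must carefully organize the estimates so as not to argue circularly — establish the LIL bound $\alpha_t \geq ct/2$ first, then the boundedness of $\beta_u e^{-\alpha_u}$, then the at-most-$\sqrt{t\log\log t}$ growth of $|\gamma_u|$ via its bracket, and only then conclude that $\langle N\rangle_\infty<\infty$. Once the exponential decay of $e^{-\alpha_u}$ is in hand and quantified, everything else is a routine application of the martingale convergence theorem and dominated convergence for the drift terms.
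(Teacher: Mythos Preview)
Your treatment of \eqref{alpha} and the convergence of $h_t$ is fine and matches the paper. The serious issue is the claim that $\beta_t e^{-\alpha_t}\to c^{-1}$ almost surely, with $c=\tfrac12\sigma^2(d-1)$, which you use both to control the drift in $\delta_t$ and, crucially, to prove \eqref{infini}. This claim is false. The l'H\^opital/Laplace heuristic $\int_0^t e^{\alpha_u}du\sim c^{-1}e^{\alpha_t}$ is valid when $\alpha_u=cu$ is deterministic, but here $\alpha_u=\alpha_0+cu+\sigma W_u$ and the ratio
\[
\beta_t e^{-\alpha_t}=\beta_0 e^{-\alpha_t}+\int_0^t e^{-c(t-u)-\sigma(W_t-W_u)}\,du
\]
does \emph{not} converge almost surely to a constant: after the substitution $s=t-u$ one recognises a Yor-type exponential functional of Brownian motion whose value keeps fluctuating with $t$. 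In fact, applying It\^o's formula shows that $u_t:=\beta_t e^{-\alpha_t}$ solves an autonomous SDE of the form $du_t=\pm\sigma u_t\,dW_t+(1-a\,u_t)\,dt$; this is a positive ergodic diffusion, so $u_t$ is neither convergent nor almost surely bounded. Consequently your argument for \eqref{infini} collapses, and the ``$\beta_u e^{-\alpha_u}$ bounded'' step in the $\delta_t$ analysis is unjustified.

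The latter is easily repaired: from $\log\beta_t/t\to c$ and $\alpha_t/t\to c$ you only get $u_t=e^{o(t)}$, but that is enough to make $\beta_u e^{-2\alpha_u}$ and $e^{-2\alpha_u}|\gamma_u|^2$ dominated by $e^{-(c-\varepsilon)u}$ for $u$ large, which is exactly what the paper asserts (in one line) to conclude that $h_t$ and $\delta_t$ converge. For \eqref{infini}, however, you need a genuinely different argument. The paper derives the SDE for $u_t$, compares $u_t$ from below with the solution $v_t$ of $dv_t=\sigma v_t\,dW_t+(\tfrac12-\tfrac{d^2\sigma^4}{8}v_t^2)\,dt$, and then argues by contradiction: if $\int_0^\infty u_s^2\,ds<\infty$ then also $\int_0^\infty v_s^2\,ds<\infty$, whence $v_t-\tfrac12 t$ converges a.s., contradicting square-integrability of $v$. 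An alternative route, closer in spirit to yours, would be to show that the ergodic diffusion $u_t$ has $\liminf_{t}\tfrac1t\int_0^t u_s^2\,ds>0$ a.s., but this still requires analysing the SDE for $u_t$ rather than appealing to a nonexistent deterministic limit.
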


\begin{figure}[ht]
\begin{center}
\includegraphics[scale=0.7]{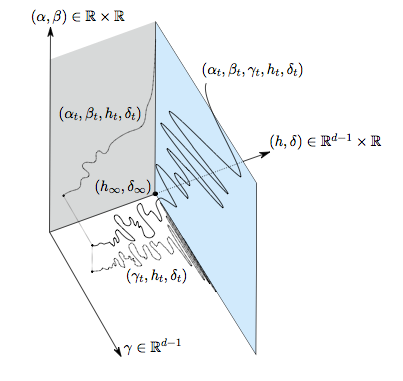}
\end{center}
\caption{Asymptotics of Dudley's diffusion in the new coordinates.\label{fig.newcoord}}
\end{figure}

\begin{proof}
Since $\alpha_t = \alpha_0 + \frac{1}{2}\sigma^2(d-1) t + \sigma W_t$, the convergence \eqref{alpha} follows from the law of iterated logarithm. Hence, the integrants in the expression defining $h_t$ and $\delta_t$ are dominated by $e^{-(\frac{1}{2}\sigma^2(d-1) -\eps)t}$ for some $\eps >0$ fixed and for $t$ sufficiently large, so that $h_t$ and $\delta_t$ converge almost surely.
Let us now check \eqref{infini} and denote by $u_t:= \beta_t e^{-\alpha_t}$. Then $u_t >0 $ is solution of 
\[
du_t = u_t \sigma dW_t +\left (1 - \frac{d\sigma^2}{2} u_t\right) dt
\] 
and since for $u \geq 0$ $\frac{1}{2} - \frac{d^2 \sigma^4}{8} u^2 \leq 1- \frac{d\sigma^2}{2} u $ we obtain by comparison theorem that almost surely $u_t \geq v_t$ where $v_t \geq 0$ is started at $u_0$ and  solution of
\[
dv_t = v_t \sigma dW_t + \left(\frac{1}{2} - \frac{d^2\sigma^4}{8} (v_t)^2\right) dt.
\]
Suppose by contradiction that $\int_0^t (u_s)^2ds$ is finite. Then, since $v_t \leq u_t$, so does $\int_0^t (v_s)^2ds$ but
\[ v_t = v_0 + \sigma \int_0^t v_s dW_s + \frac{1}{2}t - \frac{d^2\sigma^4}{8} \int_0^t (v_s)^2 ds \]
and thus $v_t - \frac{1}{2}t$ would converge almost surely. This contradicts the integrability of $(v_t)^2$.\end{proof}

\begin{remark}\label{rem.link}
The asymptotic random variable $\theta_\infty$ and $h_\infty$ define the same asymptotic line in the light cone. Namely we have
\[
\frac{1}{q(e_0, \mathcal{T}_{h_\infty} (e_0 + e_1))}\mathcal{T}_{h_\infty} (e_0 + e_1) = e_0 + \theta_\infty,
\]
or more explicitly, $h_\infty$ is a stereographical projection of $\theta_\infty$
\[
\theta_\infty = \frac{1}{1 + \vert h_\infty \vert^2} \left ( (1- \vert h_\infty \vert^2)e_1 + 2 \sum_{i=2}^{d} h_\infty^i e_i \right ).
\]
Moreover $R_\infty$ and $\delta_\infty$ are proportional 
\[
R_\infty = \frac{1}{1 + \vert h_\infty \vert^2} \delta_\infty.
\]
\end{remark}

%\textcolor{red}{Dire un mot sur l'equivariance de $h_t$ et $\delta_t$}

\paragraph{Poisson boundary.} To finally recover Bailleul's result using the devissage method, we have to show that the sub-diffusion $(\alpha_t, \beta_t, \gamma_t)$ has a trivial Poisson boundary. 
\begin{prop}
The sub-diffusion $(\alpha_t, \beta_t, \gamma_t)$ has a trivial Poisson boundary.
\end{prop}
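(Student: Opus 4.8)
My plan is to prove the equivalent \emph{Liouville property}: every bounded harmonic function $H$ for the infinitesimal generator $\Op_0$ of the sub-diffusion $(\alpha_t,\beta_t,\gamma_t)$ is constant; this is equivalent to the triviality of $\textrm{Inv}((\alpha_t,\beta_t,\gamma_t))$ through the standard correspondence $Z\leftrightarrow H(\cdot):=\E_{\cdot}[Z]$ together with the $\Prob$-a.s.\ identity $Z=\lim_{t\to\infty}H(\alpha_t,\beta_t,\gamma_t)$. I would first record two structural facts. Hörmander's condition holds: bracketing the diffusion vector field $\partial_\alpha$ with the drift produces $e^\alpha\partial_\beta$, hence $\partial_\beta$, and bracketing $e^\alpha\partial_\beta$ with $e^{-\alpha}\beta\,\partial_{\gamma^i}$ produces $\partial_{\gamma^i}$, so $\Op_0$ is hypoelliptic and $H$ is smooth; in particular $H(\alpha_t,\beta_t,\gamma_t)$ is a bounded continuous martingale. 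Moreover $\Op_0$ commutes with the translations $\gamma\mapsto\gamma+c$ and, after the change of coordinates $u:=\beta e^{-\alpha}$, also with $\alpha\mapsto\alpha+a$; in the coordinates $(\alpha,u)$ the component $\alpha_t$ is a Brownian motion with constant drift $\frac{(d-1)\sigma^2}{2}$ (so $\alpha_t\to+\infty$, assuming $d\ge 2$) while $u_t$ is a positive recurrent one-dimensional diffusion on $(0,+\infty)$ with inaccessible boundaries.

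\emph{Step 1 — $H$ does not depend on $\gamma$.} Since $\gamma_t=\gamma_0+\sigma\int_0^t u_s\,dB_s$ with $(u_s)$ adapted to $W$ and $B$ independent of $W$, conditionally on $\sigma(W_s,s\ge0)$ the vector $\gamma_t$ is Gaussian, centered at $\gamma_0$, with covariance $\sigma^2\bigl(\int_0^t u_s^2\,ds\bigr)\mathrm{Id}$, and this variance tends to $+\infty$ almost surely by \eqref{infini}. Harmonicity and $\gamma$-equivariance give, for every $t$,
\[
H(\alpha_0,\beta_0,\gamma_0)-H(\alpha_0,\beta_0,\gamma_0+c)=\E\bigl[H(\alpha_t,\beta_t,\gamma_t)-H(\alpha_t,\beta_t,\gamma_t+c)\bigr],
\]
and conditioning on $\sigma(W_s,s\ge0)$ bounds the right-hand side by $\|H\|_\infty$ times the $L^1$-distance between a centered Gaussian of variance $\int_0^t u_s^2\,ds$ and its translate by $c$; this distance vanishes as $t\to\infty$, so by dominated convergence the right-hand side tends to $0$, and being independent of $t$ it equals $0$. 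Thus $H=H(\alpha,\beta)=\widehat H(\alpha,u)$ with $\widehat H(\alpha,u):=H(\alpha,e^\alpha u)$.

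\emph{Step 2 — $H$ does not depend on $\beta$; conclusion.} Fix $u_\ast>0$ and, for $\ell>\alpha_0$, set $\tau_\ell:=\inf\{t:\alpha_t=\ell\}$ and $\sigma_\ell:=\inf\{t\ge\tau_\ell:u_t=u_\ast\}$, both almost surely finite by transience of $\alpha$ and recurrence of $u$; since $\sigma_\ell\ge\tau_\ell\to\infty$, we get $\alpha_{\sigma_\ell}\to\infty$ as $\ell\to\infty$. Realizing the diffusion from $(\alpha_0,\beta_0')$ on the same probability space as the one from $(\alpha_0,\beta_0)$ by translating the $\beta$-coordinate by the constant $\beta_0'-\beta_0$ (which replaces $u_t$ by $u_t+(\beta_0'-\beta_0)e^{-\alpha_t}$), optional stopping of the bounded martingale $\widehat H(\alpha_t,u_t)$ at $\sigma_\ell$ yields
\[
H(\alpha_0,\beta_0)-H(\alpha_0,\beta_0')=\E\Bigl[\widehat H(\alpha_{\sigma_\ell},u_\ast)-\widehat H\bigl(\alpha_{\sigma_\ell},u_\ast+(\beta_0'-\beta_0)e^{-\alpha_{\sigma_\ell}}\bigr)\Bigr].
\]
Because $\Op_0$ is hypoelliptic and, in the $(\alpha,u)$-coordinates, invariant under $\alpha\mapsto\alpha+a$, the function $\widehat H$ is equicontinuous in $u$ near $u_\ast$ with a modulus of continuity independent of $\alpha$; since $(\beta_0'-\beta_0)e^{-\alpha_{\sigma_\ell}}\to0$ almost surely, the integrand goes to $0$ and stays bounded, so the expectation tends to $0$. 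Being independent of $\ell$ it is $0$, hence $H=H(\alpha)$. Finally $H$ is a bounded harmonic function of the one-dimensional diffusion $\alpha_t=\alpha_0+\sigma W_t+\frac{(d-1)\sigma^2}{2}t$, i.e.\ a bounded solution on $\R$ of $\frac{\sigma^2}{2}H''+\frac{(d-1)\sigma^2}{2}H'=0$, hence constant; therefore $\textrm{Inv}((\alpha_t,\beta_t,\gamma_t))$ is trivial.

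I expect the main obstacle to be the equicontinuity estimate used in Step~2: one needs that bounded $\Op_0$-harmonic functions, written in the $(\alpha,u)$-coordinates, are uniformly continuous in $u$ on a neighbourhood of $u_\ast$ with a modulus independent of $\alpha$. This is exactly where the regularity hypothesis (here realized by hypoellipticity) is decisive, combined with the $\alpha$-translation symmetry of $\Op_0$; it should follow from interior Harnack or Schauder estimates for $\Op_0$, which are translation invariant in $\alpha$. An alternative to Steps~2--3 would be a shift-coupling of two copies of $(\alpha_t,\beta_t)$: run the $\alpha$-components up to a common, path-dependent level chosen so that the frozen difference of the $\beta$-components vanishes — possible because $\beta_{\tau_\ell}=e^\ell u_{\tau_\ell}$ and $u$ is recurrent — and then glue on a common continuation; there the delicate point is that this level is not a stopping time, so the law of the spliced process must be justified by a separate limiting argument.
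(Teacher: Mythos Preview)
Your proof is correct and takes a genuinely different route from the paper's. The paper gives a direct shift-coupling: it constructs two copies of $(\alpha_t,\beta_t,\gamma_t)$ from arbitrary initial points and exhibits random times $T,\bar T$ after which the (time-shifted) paths agree. Concretely, (i) the $\alpha$-components are coupled by running independent driving Brownian motions until the $\alpha$'s meet; (ii) once the $\alpha$'s coincide, the $\beta$'s are aligned by a pure time-shift, using that $\beta_t$ is strictly increasing and that after the $\alpha$-coupling both $\beta$'s have the same increments; (iii) finally the $\gamma$'s, which are time-changed $\R^{d-1}$-Brownian motions with the divergent clock \eqref{infini}, are coupled by reflection in an affine hyperplane. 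This is entirely probabilistic and needs no PDE regularity.

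Your argument instead proves the Liouville property by exploiting the symmetries of $\Op_0$: the $\gamma$-translation invariance together with the conditional Gaussian structure of $\gamma_t$ given $\sigma(W)$ kills the $\gamma$-dependence, and the $\alpha$-translation invariance in $(\alpha,u)$-coordinates, combined with hypoelliptic interior estimates, kills the $\beta$-dependence. This is closer in spirit to the d\'evissage philosophy of the paper itself (equivariance plus regularity), and is arguably more conceptual; the price is the uniform-in-$\alpha$ $C^1$-bound on bounded harmonic $\widehat H$ near $u_\ast$, which you correctly identify as the one nontrivial analytic input (it does follow from hypoelliptic interior estimates plus the $\alpha$-translation invariance you note). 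Incidentally, the shift-coupling alternative you sketch at the end is harder than what the paper actually does: you propose to match the $\beta$'s at a path-dependent $\alpha$-level that is not a stopping time, whereas the paper exploits the monotonicity of $\beta$ to match them by an honest time-shift, which sidesteps that measurability issue entirely.
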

\begin{proof}
We aim to find a shift-coupling for two copies of the process $(\alpha_t, \beta_t, \gamma_t)$. Fix $(\alpha,\beta, \gamma)$ and $(\bar{\alpha}, \bar{\beta}, \bar{\gamma})$ in $\R \times \R \times \R^{d-1}$. Let $W_t$ and $\bar{W}_t$ be two independent Brownian motion on $\R$ starting at $0$ and set for $t\geq 0$
\[
\alpha_t := \alpha + \sigma W_t + \frac{d-1}{2}\sigma^2 t , \qquad
\tilde{\alpha}_t := \bar{\alpha} +  \sigma \tilde{W}_t +\frac{d-1}{2}\sigma^2 t. 
\]
Then $\alpha_t -\tilde{\alpha}_t$ is a Brownian motion starting at $\alpha -\tilde{\alpha}$ and $\tilde{T}:= \inf \{ t \geq0; \   \alpha_t = \tilde{\alpha}_t\}$ is finite almost-surely. Next, define
\[
\bar{\alpha}_t := \tilde{\alpha}_t \ind_{t \leq \tilde{T} } + \alpha_{t} \ind_{t > \tilde{T}}, 
\]
and set for $t\geq 0$
\[
\beta_t := \beta + \int_0^t e^{\alpha_s}ds, \qquad \bar{\beta}_t := \bar{\beta} + \int_0^t e^{\bar{\alpha}_s}ds,
\]
and 
\[
S := \inf \left \{  t \geq \tilde{T} , \beta_t = \max \left ( \beta_{\tilde{T}}, \bar{\beta}_{\tilde{T}}  \right ) \right \} , \qquad
\bar{S} := \inf \left \{  t \geq \tilde{T} , \bar{\beta}_t = \max \left ( \beta_{\tilde{T}}, \bar{\beta}_{\tilde{T}}  \right )
\right \}.
\]

\if{
\begin{align*}
\alpha_t &:= \alpha + \sigma W_t + \frac{d-1}{2}\sigma^2 t \\
\tilde{\alpha}_t &:= \bar{\alpha} +  \sigma \tilde{W}_t +\frac{d-1}{2}\sigma^2 t. 
\end{align*}
Then $\alpha_t -\tilde{\alpha}_t$ is a brownian motion starting at $\alpha -\tilde{\alpha}$ and $\tilde{T}:= \inf \{ t \geq0; \   \alpha_t = \tilde{\alpha}_t\}$ is finite almost surely. Then define
\[
\bar{\alpha}_t := \tilde{\alpha}_t \ind_{t \leq \tilde{T} } + \alpha_{t} \ind_{t > \tilde{T}}. 
\]
Then set for $t\geq 0$
\begin{align*}
\beta_t &:= \beta + \int_0^t e^{\alpha_s}ds \\
\bar{\beta}_t &:= \bar{\beta} + \int_0^t e^{\bar{\alpha}_s}ds
\end{align*}
and 
\begin{align*}
S &:= \inf \left \{  t \geq \tilde{T} , \beta_t = \max \left ( \beta_{\tilde{T}}, \bar{\beta}_{\tilde{T}}  \right ) \right \} \\
\bar{S} &:= \inf \left \{  t \geq \tilde{T} , \bar{\beta}_t = \max \left ( \beta_{\tilde{T}}, \bar{\beta}_{\tilde{T}}  \right )
\right \}.
\end{align*}
}
\fi

Since $\beta_t$ and $\bar{\beta}_t$ are strictly increasing $S$ and $\bar{S}$ are almost surely finite and for $s\geq 0$ we obtain $\beta_{S+s}= \bar{\beta}_{\bar{S} + s}$ (and since $S\geq \tilde{T}$ we have also $\alpha_{S+s} = \bar{\alpha}_{S+s}$). 
Now, consider $t\mapsto B(t)$ and $t \mapsto \hat{B}(t)$ two independent Brownian motions in $\R^{d-1}$ starting at $0$  and set 
\[
\gamma_t := B\left ( \int_S^{t \vee S} \left (e^{-\alpha_s}\beta_s \right)^2 ds  \right ) + \hat{B} \left ( \int_0^{t\wedge S} \left (e^{-\alpha_s}\beta_s \right)^2 ds  \right ) + \gamma.
\]
Let us denote by $t \mapsto \bar{B}(t)$ the Brownian motion starting at $0$ and being the image of $B$ by the linear reflection in the hyperplan orthogonal to the vector 
\[
\hat{B}\left ( \int_0^{ S} \left (e^{-\alpha_s}\beta_s \right)^2 ds  \right ) - \hat{B}\left ( \int_0^{ \bar{S} } \left (e^{-\bar{\alpha}_s} \bar{\beta}_s \right)^2 ds  \right ) +\gamma - \bar{\gamma},
\] 
and set
\[
\bar{\gamma}_t:= \bar{B}\left ( \int_{\bar{S}}^{t \vee \bar{S}} \left (e^{-\bar{\alpha}_s}\bar{\beta}_s \right)^2 ds  \right ) + \hat{B} \left ( \int_0^{t\wedge \bar{S}} \left (e^{-\bar{\alpha}_s}\bar{\beta}_s \right)^2 ds  \right ) + \bar{\gamma}.
\]
Since a Brownian motion in $\R^{d-1}$ reaches in a finite time every affine hyperplan, we obtain that
\[
R:= \inf \left \{ t \geq 0, B(t) + \hat{B}\left ( \int_0^{ S} \left (e^{-\alpha_s}\beta_s \right)^2 ds  \right )  + \gamma = \bar{B}(t) + \hat{B}\left ( \int_0^{ \bar{S} } \left (e^{-\bar{\alpha}_s} \bar{\beta}_s \right)^2 ds  \right ) + \bar{\gamma}  \right \},
\]
is almost-surely finite. Finally, introducing $T$ and $\bar{T}$ such that 
\[
\int_S^{T} \left (e^{-\alpha_s}\beta_s \right)^2 ds  = R, \qquad \int_{\bar{S}}^{\bar{T}} \left (e^{-\bar{\alpha}_s}\bar{\beta}_s \right)^2 ds  = R,
\]
 we have $\gamma_T = \bar{\gamma}_{\bar{T}}$, hence the result.
\end{proof}
We are finally in position to apply the d\'evissage scheme to the diffusion $(x_t, g_t)_{t \geq 0}$ where
$x_t:=(\alpha_t,\beta_t,\gamma_t) \in X:=\R \times \R \times \R^{d-1}$ and $g_t:= (h_t, \delta_t) \in G:=\R^{d-1} \times \R$. Namely, from Theorem \ref{theo.trivial}, we can conclude that

\begin{theo}
For all $(\dot{\xi}, \xi) \in \Hyp^d \times \R^{1,d}$, the invariant $\sigma$-algebra of Dudley's diffusion starting from $(\dot{\xi}, \xi)$ coincides with $\sigma(h_\infty, \delta_\infty)$ up to $\Prob_{(\dot{\xi},\xi)}$ negligible sets.
\end{theo}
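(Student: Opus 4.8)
The plan is to recognize the final statement as a direct application of Theorem \ref{theo.trivial} to the diffusion $(x_t,g_t)_{t\geq 0}$ with $x_t:=(\alpha_t,\beta_t,\gamma_t)\in X:=\R\times\R\times\R^{d-1}$ and $g_t:=(h_t,\delta_t)\in G:=\R^{d-1}\times\R$, the latter viewed as an abelian Lie group under addition (so Lebesgue measure is a bi-invariant Haar measure and approximate units are readily available). Since the change of coordinates $(\dot\xi,\xi)\mapsto(\alpha,\beta,\gamma,h,\delta)$ is a diffeomorphism of $\Hyp^d\times\R^{1,d}$ onto $X\times G$, it intertwines the two path spaces and carries the invariant $\sigma$-field of Dudley's diffusion onto $\textrm{Inv}((x_t,g_t)_{t\geq 0})$; hence it suffices to identify the latter. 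The bulk of the work is therefore just to check Hypotheses \ref{hypo.devissage} to \ref{hypo.reg} for $(x_t,g_t)_{t\geq 0}$.

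For Hypothesis \ref{hypo.devissage}: inspection of the SDE system of the lemma above shows that the equations governing $(\alpha_t,\beta_t,\gamma_t)$ involve only $(\alpha_t,\beta_t,\gamma_t)$ together with the driving Brownian motion $(W_t,B_t)$, so $(x_t)_{t\geq 0}$ is genuinely a sub-diffusion of $(x_t,g_t)_{t\geq 0}$; and the last proposition establishes, through an explicit shift-coupling of two copies of $(\alpha_t,\beta_t,\gamma_t)$ from arbitrary starting points, that $\textrm{Inv}((x_t)_{t\geq 0})$ is $\Prob_{(x,g)}$-trivial for every starting point, which is precisely the hypothesis needed by Theorem \ref{theo.trivial}. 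Hypothesis \ref{hypo.conv} is exactly the content of the asymptotic proposition, namely that $h_t\to h_\infty$ and $\delta_t\to\delta_\infty$ almost surely. For Hypothesis \ref{hypo.cov}, one observes that the coefficients of the SDE system do not depend on $(h,\delta)$; equivalently, when $\Op$ is written in the coordinates $(\alpha,\beta,\gamma,h,\delta)$ the variables $h$ and $\delta$ appear only through the first- and second-order differential operators $\partial_h,\partial_\delta,\partial_h^2,\partial_\delta^2,\partial_h\partial_\delta$, so $\Op$ commutes with the translations $(h,\delta)\mapsto(h+h_0,\delta+\delta_0)$, i.e. $\Op(g_0\cdot f)=g_0\cdot(\Op f)$ for the natural action of $G=\R^{d-1}\times\R$ on $C^\infty(X\times G,\R)$. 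One may also record here, via Remark \ref{rem.link}, that $\sigma(h_\infty,\delta_\infty)=\sigma(\theta_\infty,R_\infty)$, so that the statement indeed recovers Bailleul's description.

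The one point requiring a genuine (if standard) argument is the regularity Hypothesis \ref{hypo.reg}, and this is the step I expect to be the main obstacle: rather than verifying Hörmander's bracket condition directly on the somewhat involved system above, it is cleaner to argue in the original coordinates, where $\Op=\tfrac{\sigma^2}{2}\Delta^{\Hyp^d}_{\dot\xi}+\dot\xi\cdot\partial_\xi$ is a kinetic Fokker--Planck-type operator: it is elliptic in the velocity variable $\dot\xi$ and carries a transport term in $\xi$ with nowhere-vanishing coefficient $\dot\xi$, and a bracket computation ($[\Delta^{\Hyp^d}_{\dot\xi},\,\dot\xi\cdot\partial_\xi]$ generating the $\partial_\xi$ directions) shows that $\Op$ satisfies Hörmander's condition on $\Hyp^d\times\R^{1,d}$. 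Hence bounded $\Op$-harmonic functions are smooth, in particular continuous, and continuity is preserved by the diffeomorphism, so Hypothesis \ref{hypo.reg} holds in the new coordinates too. With Hypotheses \ref{hypo.devissage}--\ref{hypo.reg} verified, Theorem \ref{theo.trivial} applied to $(x_t,g_t)_{t\geq 0}$ yields $\textrm{Inv}((x_t,g_t)_{t\geq 0})=\sigma(g_\infty)=\sigma(h_\infty,\delta_\infty)$ up to $\Prob_{(\dot\xi,\xi)}$-negligible sets, which is the asserted result.
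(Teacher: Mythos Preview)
Your proposal is correct and follows essentially the same route as the paper: the theorem is obtained by applying Theorem \ref{theo.trivial} with $X=\R\times\R\times\R^{d-1}$, $G=\R^{d-1}\times\R$, $x_t=(\alpha_t,\beta_t,\gamma_t)$ and $g_t=(h_t,\delta_t)$, the verification of Hypotheses \ref{hypo.devissage}--\ref{hypo.cov} having been carried out in the preceding lemma and two propositions. Your explicit check of Hypothesis \ref{hypo.reg} via H\"ormander's condition on the kinetic operator $\Op=\tfrac{\sigma^2}{2}\Delta^{\Hyp^d}_{\dot\xi}+\dot\xi\cdot\partial_\xi$ is a welcome detail that the paper leaves implicit.
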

Note that by Remark \ref{rem.link},  we have $\sigma(h_\infty, \delta_\infty) = \sigma( \theta_\infty, R_\infty)$ i.e. we recover precisely Bailleul's result.

\bibliographystyle{alpha}

\end{document}